\newtheorem{thm}{Theorem}[section]
\newcommand{\vb}{{\bs{v}}}
\newcommand{\beq} {\begin{equation}}
\newcommand{\eeq} {\end{equation}}
\newcommand{\bdm} {\begin{displaymath}}
\newcommand{\edm} {\end{displaymath}}
\newcommand{\bit}{\begin{itemize}}
\newcommand{\eit}{\end{itemize}}
\newcommand{\bde}{\begin{description}}
\newcommand{\ede}{\end{description}}
\newcommand{\ben}{\begin{enumerate}}
\newcommand{\een}{\end{enumerate}}
\newcommand{\algn}[1]{\begin{align} #1 \end{align}}
\newcommand{\algns}[1]{\begin{align*} #1 \end{align*}}
\newcommand{\mltln}[1]{\begin{multline} #1 \end{multline}}
\newcommand{\barr}{\begin{array}}
\newcommand{\earr}{\end{array}}
\newcommand{\mc}[1]{\mathcal{#1}}
\newcommand{\LRp}[1]{\left( #1 \right)}
\newcommand{\LRa}[1]{\left< #1 \right>}
\newcommand{\ra}{\rightarrow}
\newcommand{\R}{{\mathbb R}}
\renewcommand{\div}{\operatorname{div}}
\newcommand{\esssup}{\operatorname{esssup}}
\newcommand{\bs}{\boldsymbol}
\newtheorem{theorem}{Theorem}[section]
\newtheorem{rmk}[theorem]{Remark}
\title[Mixed method reaction-diffusion with membrane]{Mixed finite element methods for nonlinear reaction-diffusion equations with interfaces }
\begin{document}

\author{Xinran Jin$^1$ \and Jeonghun J. Lee$^2$}
\address{$^{1,2}$ Department of Mathematics, Baylor University, Waco, Texas, USA }

\email{$^1$xinran\_jin1@baylor.edu, $^2$jeonghun\_lee@baylor.edu}

\subjclass[2000]{Primary: 65N30, 65N15}
\begin{abstract}
  We develop mixed finite element methods for nonlinear reaction-diffusion equations with interfaces which have Robin-type interface conditions. We introduce the velocity of chemicals as new variables and reformulate the governing equations. The stability of semidiscrete solutions, existence and the a priori error estimates of fully discrete solutions are proved by fixed point theorem and continuous/discrete Gr\"onwall inequalities. Numerical results illustrating our theoretical analysis are included.
\end{abstract}

\keywords{reaction-diffusion equations, mixed finite element methods, interface conditions, error analysis}
\date{April, 2023}
\maketitle

\section{Introduction}
The reaction-diffusion equations are widely used to describe the diffusion of chemical substances with their reactions. Therefore, methods to numerically solve reaction-diffusion equations have also been studied for a very long time. Recently, a reaction-diffusion model interacting with other physical/chemical conditions has been actively studied, beyond the simple reaction-diffusion equations. An example of such extended reaction-diffusion equations is the reaction-diffusion model with a thin membrane in the domain. This model can be used to model the cases where a semi-permeable membrane is involved in reaction-diffusion processes of chemicals. The governing equations are a set of reaction-diffusion equations in which interface conditions on thin membranes are involved (cf. \cite{Kedem-Katchalsky:1961,Kleinhans:1998}). Reaction-diffusion equations with such interface conditions have been studied in several previous studies. Well-posedness of partial differential equation models and numerical methods for some diffusion, advection-diffusion, reaction-diffusion equations with possibly nonlinear interface conditions were studied (\cite{Cangiani-Georgoulis-Sabawi:2020,Cangiani-Georgoulis-Jensen:2013,Cangiani-Georgoulis-Jensen:2016,Cai-Ye-Zhang:2011,Ciavolella-Perthame:2021,Li-Su-Wang-Wang:2021,Bathory-Bulicek-Soucek:2020,Bothe-Pierre:2010,Calabro-Zunino:2006,Quarteroni-Veneziani-Zunino:2001,Brera-Jerome-Mori-Sacco:2010,Chaplain-Giverso-Lorenzi-Preziosi:2019,Chen-Zou:1998,Li-Melenk-Wohlmuth-Zou:2010,Plum-Wieners:2003}).

In this paper, we will study mixed finite element methods to solve nonlinear reaction-diffusion equations with interface conditions, particularly, for the models in \cite{Ciavolella-Perthame:2021}. In mixed finite element methods using the dual mixed form of diffusion equations (see, e.g., \cite{Boffi-Brezzi-Fortin-book}), the velocity of each chemical is chosen as additional variable. As is well known, when the mixing method is used, the numerical solutions satisfy local mass conservation without additional post-processing for numerical solutions, and the flux of chemical passing through the membrane is given as a continuous quantity. 
Another advantage of the mixed method is that preconditioners for fast solvers for this type of interface problems, have already been well developed with theoretical basis (cf. \cite{Budisa-Boon-Hu:2020}). 

The paper is organized as follows. In Section~\ref{sec:preliminaries} we introduce definitions, governing equations of the reaction-diffusion equations with membrane structures, and semidiscrete discretization with finite element methods. In Section~\ref{sec:fully-discrete-scheme} we define fully discrete scheme with the Crank--Nicolson method and prove well-posedness of fully discrete solutions for sufficiently small time step sizes. We prove the a priori error estimates of the fully discrete scheme in Section~\ref{sec:error-analysis} and present numerical experiment results in Section~\ref{sec:numerical-experiment}. Conclusions and future research directions will be given in Section~\ref{sec:conclusion}. 


\section{Preliminaries} \label{sec:preliminaries}

%

Let $\Omega$ be a bounded domain in $\mathbb{R}^d$ ($d=2,3$) with Lipschitz continuous polygonal/polyhedral boundary. For finite element discretization we consider a family of triangulations $\{ \mathcal{T}_h\}_{h>0}$ of $\Omega$ with shape-regular triangles/tetrahedra and without hanging nodes. Here $h>0$ is the maximum radius of triangles/tetrahedra in $\mathcal{T}_h$. The $(d-1)$-dimensional simplices in $\mathcal{T}_h$ will be called facets in the paper. 

For $1 \le r \le \infty$, $L^r(\Omega)$ is the Lebesgue space with the norm  
\algns{
  \| v \|_{L^r(\Omega)} =
  \begin{cases}
    \left( \int_{\Omega} |v(x)|^r \, dx \right)^{1/r}, & \text{ if }1 \leq r < \infty, \\
    \esssup_{x \in \Omega} \{ |v(x)| \}, & \text{ if } r = \infty.
  \end{cases}
}
For a subdomain $D \subset \Omega$ with positive $d$-dimensional Lebesgue measure, $L^2(D)$ and $L^2(D;\R^d)$ be the sets of $\R$- and $\R^d$-valued square integrable functions with inner products $\LRp{v, v'}_D := \int_{D} v v' \,d x$ and
$\LRp{\vb, \vb'}_D := \int_{D} \vb \cdot \vb' \,d x$. For an integer $l \ge 0$, $\mc{P}_l(D)$ and $\mc{P}_l(D;\R^d)$ are the spaces of $\R$- and $\R^d$-valued polynomials of degree $\le l$ on $D$. 
In the paper $H^s(D)$, $s \ge 0$, denotes the Sobolev space based
on the $L^2$-norm with $s$-differentiability on $D$. We refer to \cite{Evans-book} for a rigorous definition of $H^s(D)$. The norm on $H^s(D)$ is denoted by $\| \cdot \|_{s,D}$ and $D$ is omitted if $D = \Omega$. 

For $T >0$ and a separable Hilbert space $\mathcal{X}$, let $C^0 ([0, T] ; \mathcal{X})$ denote the set of functions $f : [0, T] \rightarrow \mathcal{X}$ that are continuous
in $t \in [0, T]$. For an integer $m \geq 1$, we define
\begin{equation*}
  C^m ([0, T]; \mathcal{X}) = \{ f \, | \, \partial_t^i f \in C^0([0, T];\mathcal{X}), \, 0 \leq i \leq m \},
\end{equation*}
where $\partial_t^i f$ is the $i$-th time derivative in the
sense of the Fr\'echet derivative in $\mathcal{X}$ (cf.~\cite{Yosida-book}).
For a function $f : [0, T] \rightarrow \mathcal{X}$, the Bochner norm is defined by
\algns{
  \| f \|_{L^r(0, T; \mathcal{X})} =
  \begin{cases}
    \left( \int_0^T \| f(s) \|_{\mathcal{X}}^r ds \right)^{1/r}, \quad 1 \leq r < \infty, \\
    \esssup_{t \in (0, T)} \| f (t) \|_{\mathcal{X}}, \quad r = \infty.
  \end{cases}
}
$W^{k,r}(0, T; \mathcal{X})$ for a non-negative integer $k$ and $1 \leq r \leq \infty$ is defined by  the closure of $C^k ([0, T]; \mathcal{X})$ with the norm $\| f \|_{W^{k,r}(0, T;\mathcal{X})} = \sum_{i=0}^k \| \partial_t^i f \|_{L^r(0, T; \mathcal{X})}$. The semi-norm $\| f \|_{\dot{W}^{k,r}(0, T;\mathcal{X})}$ is defined by $\| f \|_{\dot{W}^{k,r}(0, T;\mathcal{X})} = \| \partial_t^k f \|_{L^r(0, T; \mathcal{X})}$.

For a normed space $\mathcal{X}$ with norm $\| \cdot \|_{\mathcal{X}}$ and functions $f_1, f_2 \in \mathcal{X}$, $\| f_1, f_2 \|_{\mathcal{X}}$ will denote $\| f_1 \|_X + \| f_2 \|_{\mathcal{X}}$, and $\| f_1, f_2, f_3 \|_{\mathcal{X}}$ is defined similarly.

\subsection{Governing equations } \label{subsec:governing-equations}
\begin{figure}
 \begin{center}
 \begin{tikzpicture}[scale=2.5]
     \draw [color=black,fill = white!20!white](0,0) rectangle (2,2);
     \draw [thin,color=black] (1,0.)--(1,2.)
     node[ near start, below, right ]{$\Gamma$};
     \draw[->] (1,1)--(0.7,1) node[midway,above]{$n$};
     \draw [color=black] (0,1)--(0,2)--(2,2)--(2,1);
     \node at (0.5,0.5) {$\Omega_-$};
     \node at (1.5,0.5) {$\Omega_+$};
 \end{tikzpicture}
 \caption{A model domain $\Omega$ with interface $\Gamma$\phantom{aaaaaaaaaaaaaa} }
 \label{fig:model-domain}
 \end{center}
 \end{figure}
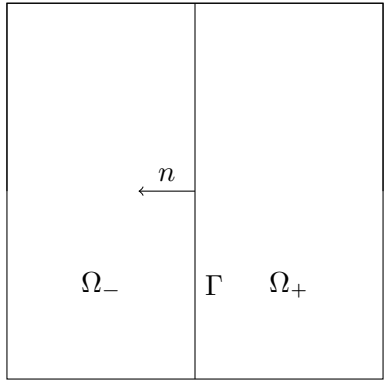

In this subsection we introduce governing equations, a reformulation of the equations, and a variational formulation for finite element methods.

We assume that $\Omega_+, \Omega_- \subset \Omega$ are two disjoint subdomains with polygonal/polyhedral boundaries such that $\overline{\Omega_+} \cup \overline{\Omega_-} = \overline{\Omega}$, and let $\Gamma = \partial \Omega_+ \cap \partial \Omega_-$. For a function $v \in L^2(\Omega)$ such that $v|_{\Omega_j} \in H^1(\Omega_j)$ for $j=+,-$, we use $v|_{\Gamma_j}$ to denote the trace of $v$ on $\Gamma$ from $v|_{\Omega_j}$. Note that $v|_{\Gamma_+} \not = v|_{\Gamma_-}$ in general. Throughout this paper, the unit normal vector field $n$ on $\Gamma$ is the normal vector outward from $\Omega_+$ (see Figure~\ref{fig:model-domain}). 

Suppose that $u_i$, $1\le i \le N$ are real-valued functions on $[0,T] \times \Omega$.
We use $u_i(t)$, $0\le t \le T$, to denote a real-valued function $u_i(t, \cdot)$ defined on $\Omega$.
For given functions 
\begin{align}
    f_i : \mathbb{R}^N \to \mathbb{R}, \quad g_i : [0,T] \times \partial \Omega \to \mathbb{R}
\end{align}
we consider the system of equations to find 
\begin{align*}
    (u_1, \cdots, u_N):[0,T] \times \Omega \to \mathbb{R}^N
\end{align*}
such that 
\begin{subequations}
    \label{eq:u-system-eqs}
\begin{align}
    \partial_t u_i(t) - \div(\kappa_i \nabla u_i(t)) = f_i(u_1(t), \cdots, u_N(t))  & & \text{ in } \Omega,
\end{align}
\end{subequations}
with interface condition
\begin{align}
    -(\kappa_i \nabla u_i(t))\cdot n = K_i(u_{i}|_{\Gamma_+}(t) - u_{i}|_{\Gamma_-}(t)) & & \text{ on } \Gamma, K_i > 0 ,
\end{align}
for all $0< t \le T$, $1 \le i \le N$ and with initial condition 
\begin{align} \label{eq:u-initial-condition}
    (u_1(0), \cdots, u_N(0)) .
\end{align}
To make \eqref{eq:u-system-eqs} a well-posed system of partial differential equations, appropriate boundary conditions are necessary. A set of full Dirichlet boundary conditions 
\begin{align*}
    u_i(t) = g_i(t) \quad \text{ on }\partial\Omega    \quad  \forall 1 \le i \le N, 0 < t \le T,
\end{align*}
can be imposed to make \eqref{eq:u-system-eqs} well-posed. For simplicity, we assume that $g_i = 0$ for $1 \le i \le N$, $0< t\le T$ in the rest of this paper but the discussions below can be extended to more general  boundary conditions including $g_i \not = 0$ and Neumann or mixed boundary conditions on $\partial \Omega$ with appropriate modifications. 
Throughout this paper we assume that the functions $\{f_i\}_{k=1}^N$ satisfy a Lipschitz continuity assumption that as follows: For $v_i, w_i \in L^2(\Omega)$, $1 \le i \le N$, 
\begin{multline} \label{eq:lipschitz-continuity}
    \|f_i(v_1(x), \cdots, v_N(x)) - f_i(w_1(x), \cdots, w_N(x))\| 
    \\
    \le L_i \LRp{ \sum_{i=1}^N |v_i(x) - w_i(x)|^2 }^{\frac 12} 
\end{multline}
for almost every $x \in \Omega$ with a constant $L_i>0$ where $\| \cdot \|$ means the Euclidean norm in $\mathbb{R}^N$.

By introducing $\sigma_i = - \kappa_i \nabla u_i$, we have a system equivalent to \eqref{eq:u-system-eqs} with unknowns $(\sigma_1(t),\cdots ,\sigma_N(t))$, $(u_1(t),\cdots,u_N(t))$ such that
\begin{subequations} \label{eq:mixed-strong-eqs}
\begin{align}
    \label{eq:mixed-strong-eq1}
    \kappa_{i}^{-1} \sigma_i (t) &= -\nabla u_i (t) & & \text{ in } \Omega, 
    \\
    \label{eq:mixed-strong-eq2}
    \partial_t u_i (t)  + \div  \sigma_i (t)  &= f_i(u_1(t) , \cdots , u_N(t) ) & & \text{ in } \Omega
\end{align}
\end{subequations}
with interface conditions
\begin{align}
    \label{eq:mixed-strong-interface}
    \sigma_i(t) \cdot n &= K_i(u_{i}|_{\Gamma_+}(t)  - u_{i}|_{\Gamma_-}(t) ) & & \text{ on } \Gamma     
\end{align}
for all $0 \le t \le T$, $1 \le i \le N$. The boundary conditions 
\begin{align}
    \label{eq:mixed-strong-bc}
    u_i(t)  &= 0 & &  \text{ on } \partial\Omega , \quad 1 \le i \le N, 0 < t \le T 
\end{align}
are imposed as before. 
For initial conditions, in addition to $(u_1(0),\cdots,u_N(0))$ in \eqref{eq:u-initial-condition}, we need 
$(\sigma_1 (0),\cdots ,\sigma_N(0))$ satisfying \eqref{eq:mixed-strong-eq1}, \eqref{eq:mixed-strong-interface} for $t=0$.

To derive a variational formulation of \eqref{eq:mixed-strong-eqs}, let
\begin{align*}
    \Sigma = \{\tau \in H(\div,\Omega): \tau\cdot n|_{\Gamma} \in L^2(\Gamma) \} , \qquad V = L^2(\Omega),
\end{align*}
where $H(\div,\Omega)$ is the subset of $L^2(\Omega; \mathbb{R}^d)$ such that the divergence of $\tau \in L^2(\Omega; \mathbb{R}^d)$ is well-defined as an element in $L^2(\Omega)$. Then, we define $\bs{\Sigma}$ and $\bs{V}$ by 
\algns{
    \bs{\Sigma} = \Sigma_{1} \times \cdots \times \Sigma_{N}, \quad \bs{V} = V_1 \times \cdots \times V_N
}
with $\Sigma_i = \Sigma$, $V_i = V$ for $1 \le i \le N$. Then, after the integration by parts of \eqref{eq:mixed-strong-eq1} for $1 \le i \le N$, we can derive a system of variational equations from \eqref{eq:mixed-strong-eqs} and \eqref{eq:mixed-strong-interface}: Find $(\sigma_1, \cdots , \sigma_N) \in C^0([0,T];\bs{\Sigma})$, $(u_1, \cdots , u_N) \in C^1([0,T];\bs{V})$ such that
\begin{subequations}
    \label{eq:variational-eqs}
\begin{align}
    \label{eq:variational-eq1}
    (\kappa_{i}^{-1} \sigma_i(t), \tau_i)_{\Omega} + \left< K_{i}^{-1} \sigma_i(t) \cdot n, \tau_i \cdot n \right>_{\Gamma} - (u_i(t), \div \tau_i)_{\Omega} &= 0,
    \\
    \label{eq:variational-eq2}
    (\partial_t u_i(t), v_i)_{\Omega} + (\div \sigma_i(t), v_i)_{\Omega} - (f_i (u_1(t), \cdots, u_N(t)), v_i)_{\Omega} &= 0
\end{align}
\end{subequations}
for all $0\le t \le T$, $1\le i \le N$ and for all $(\tau_1, \cdots, \tau_N) \in \bs{\Sigma}$, $(v_1, \cdots, v_N) \in \bs{V}$.

\subsection{Finite element discretization}
In this subsection we present discretization of \eqref{eq:variational-eqs} with finite element methods. 


For an integer $l \geq 0$ and a set $D \subset \R^d$, $\mathcal{P}_l(D)$ is the space of polynomials defined on $D$ of degree at most $l$. Similarly, $\mc{P}_l(D; \mathbb{R}^d)$ is the space of $\mathbb{R}^d$-valued polynomials of degree at most $l$.
For given $l\ge 1$ let us define 
\algn{ \label{eq:Sigmah-local}
  \Sigma_h(T) &= \mc{P}_{l-1}(T; \mathbb{R}^d) +  \begin{pmatrix}
  x_1 \\
  \vdots \\
  x_d
  \end{pmatrix} 
  \mc{P}_{l-1}(T) .
}
Suppose that $\Sigma_{h,i} \subset \Sigma_{i}$ is the Raviart--Thomas(--Nedelec) element (\cite{Nedelec80,RT75,Boffi-Brezzi-Fortin-book}) defined by  
\begin{align*}
  \Sigma_{h,i} &= \{ \tau \in \Sigma_i \,:\, \tau|_T \in \Sigma_h(T), \quad \forall T \in \mc{T}_h \} 
\end{align*}
and $V_h$ is defined by 
\algn{ \label{eq:Vh-local}
    V_h = \{ v \in V \,:\, v|_T \in \mc{P}_{l-1}(T) \quad \forall T \in \mc{T}_h \} .
}
Then, it is well-known that the pair $(\Sigma_{h,i}, V_h)$ satisfies 
%
\begin{align} \label{eq:inf-sup}
    \div \Sigma_{h,i} = V_{h,i}, \qquad 
    \inf_{v_i\in V_{h}} \sup_{\tau_i \in \Sigma_{h,i}} \frac{(v_i,\div \tau_i)_{\Omega}}{\|v_i\| \|\tau_i\|_{\div}} \ge C >0
\end{align}
with a uniform $C>0$ independence of $i$ and mesh sizes of $\mathcal{T}_h$ \cite[p.~406]{Boffi-Brezzi-Fortin-book}. 
%

\subsection{Semidiscrete scheme and stability}
In this subsection we define a semidiscrete scheme of \eqref{eq:variational-eqs} with $\bs{\Sigma}_h \times \bs{V}_h$ and discuss the stability of semidiscrete solutions. For simplicity define $\bs{\sigma}$ and $\bs{u}$ by $(\sigma_1, \sigma_2, \cdots, \sigma_N)$ and $(u_1, \cdots, u_N)$, and semidiscrete solutions $\bs{\sigma}_h :[0,T] \ra \bs{\Sigma}_h$, $\bs{u}_h:[0,T] \ra \bs{V}_h$ are defined similarly.

For 
\begin{align*}
    \bs{\tau} &= (\tau_1,\cdots,\tau_N), \bs{\eta}=(\eta_1,\cdots ,\eta_N) \in \bs{\Sigma}, 
    \\
    \bs{v} &= (v_1,\cdots,v_N), \bs{w} = (w_1,\cdots,w_N) \in \bs{V},
\end{align*}
define three bilinear and one nonlinear forms
\begin{align*}
    a(\bs{\tau},\bs{\eta} ) &:= \sum_{i=1}^{N} (\kappa_{i}^{-1} \tau_i, \eta_i)_{\Omega} +  \sum_{i=1}^N \left<K_{i}^{-1} \tau_i \cdot n, \eta_{i} \cdot n \right>_{\Gamma},
    \\
    b(\bs{\tau}, \bs{v}) &:= \sum_{i=1}^{N} (v_i, \div\tau_i)_{\Omega}, 
    \\
    c(\bs{v},\bs{w}) &:= \sum_{i=1}^{N} (v_i, w_i)_{\Omega},
    \\
    d(\bs{v},\bs{w}) &:= \sum_{i=1}^{N} (f_i(v_i,\cdots,v_N), w_i)_{\Omega} .
\end{align*}
Then, the system \eqref{eq:variational-eqs} can be rewritten as
\begin{subequations}
\begin{align}
    a(\bs{\sigma}(t),\bs{\tau}) - b(\bs{\tau}, \bs{u}(t)) &= 0 & & \forall \bs{\tau} \in \bs{\Sigma}, 
    \\
    b(\bs{\sigma}(t),\bs{v}) + c(\partial_t \bs{u}(t),\bs{v}) - d(\bs{u}(t),\bs{v}) &= 0 & & \forall \bs{v} \in \bs{V}.
\end{align}
\end{subequations}
A discrete-in-space and continuous-in-time semidiscrete scheme with finite element space $\bs{\Sigma}_h \times \bs{V}_h$, is to find $(\bs{\sigma}_h, \bs{u}_h) : [0,T] \to \bs{\Sigma}_h \times \bs{V}_h$ such that
\begin{subequations} \label{eq:semidiscrete-eqs}
\begin{align}
    \label{eq:semidiscrete-eq1}
    a(\bs{\sigma}_h(t),\bs{\tau}) - b(\bs{\tau}, \bs{u}_h(t)) &= 0 & & \forall \bs{\tau} \in \bs{\Sigma}_h, 
    \\
    \label{eq:semidiscrete-eq2}
    b(\bs{\sigma}_h(t), \bs{v}) + c(\partial_t \bs{u}_h(t), \bs{v}) - d(\bs{u}_h(t),\bs{v}) &= 0 & & \forall \bs{v} \in \bs{V}_h
\end{align}    
\end{subequations}
for all $t \in [0,T]$. For stability analysis, let $\bs{\tau} = \bs{\sigma}_h(t)$, $\bs{v} = \bs{u}_h(t)$ and add the equations. Then,
\begin{align*}
    &\frac12 \frac{d}{dt} c(\bs{u}_h(t),\bs{u}_h(t)) + a(\bs{\sigma}_h(t),\bs{\sigma}_h(t)) = d(\bs{u}_h(t),\bs{u}_h(t)).
\end{align*}
By the Lipschitz continuity assumption \eqref{eq:lipschitz-continuity},  we can obtain 
\begin{align*}
    \frac12 \frac{d}{dt} c(\bs{u}_h(t),\bs{u}_h(t)) + a(\bs{\sigma}_h(t), \bs{\sigma}_h(t)) &= d(\bs{u}_h(t),\bs{u}_h(t)) 
    \\
    &\le L \|\bs{u}_h(t)\|_{L^2(\Omega)}^{2} 
\end{align*}
where $L=\max_{1\le i\le N} \{L_i\}$. Recalling that $c(\bs{u}_h(t),\bs{u}_h(t)) = \|\bs{u}_h(t)\|_{L^2(\Omega)}^{2}$, and $a(\bs{\sigma}_h(t),\bs{\sigma}_h(t)) \ge 0$, by Gr\"{o}nwall inequality,
\begin{align*}
    \|\bs{u}_h(t)\|_{L^2(\Omega)} \le e^{2Lt} \|\bs{u}_h(0)\|_{L^2(\Omega)} .
\end{align*}
If $T>0$ is fixed, then 
\begin{align*}
    \max_{0\le t \le T} \|\bs{u}_h(t)\|_{L^2(\Omega)} \le e^{2LT} \|\bs{u}_h(0)\|_{L^2(\Omega)}.
\end{align*}
By \eqref{eq:semidiscrete-eq1}, the definition of $a(\cdot, \cdot)$, and the inf-sup condition \eqref{eq:inf-sup},
\begin{align*}
    \| \bs{\sigma}_h(t) \|_{L^2(\Omega)} \le  a(\bs{\sigma}_h(t), \bs{\sigma}_h(t))^{1/2} \le C\|\bs{u}_h(t)\|_{L^2(\Omega)},
\end{align*}
so we obtain, 
\begin{align*}
    \max_{0\le t \le T} a(\bs{\sigma}_h(t), \bs{\sigma}_h(t))^{1/2} \le Ce^{2LT} \|\bs{u}_h(0)\|_{L^2(\Omega)}  .
\end{align*}

\section{Fully discrete scheme and existence of solutions} 
\label{sec:fully-discrete-scheme}

In this section we present a fully discrete numerical scheme with the Crank--Nicolson method.

For fully discrete scheme, suppose that $(\bs{\sigma}_{h}^{k}, \bs{u}_{h}^{k}) \in \bs{\Sigma}_h \times \bs{V}_h$, a numerical solution of the previous time step is given. The Crank--Nicolson scheme is to find $(\bs{\sigma}_h^{k+1}, \bs{u}_h^{k+1}) \in \bs{\Sigma}_h \times \bs{V}_h$ such that 
\begin{subequations} 
\label{eq:fully-discrete-eqs}
\begin{align}
    \label{eq:fully-discrete-eq1}
    \frac12 a(\bs{\sigma}_{h}^{k}+\bs{\sigma}_{h}^{k+1}, \bs{\tau}) - \frac12 b(\bs{\tau}, \bs{u}_{h}^{k}+\bs{u}_{h}^{k+1}) &= 0,
    \\
    \label{eq:fully-discrete-eq2}
    \frac12 b(\bs{\sigma}_{h}^{k}+\bs{\sigma}_{h}^{k+1} , \bs{v}) + \frac{1}{\Delta t} c\left({\bs{\sigma}_{h}^{k+1} - \bs{\sigma}_{h}^{k}} , v\right)&
    \\
    \notag 
    - \frac12(d(\bs{u}_{h}^{k}, \bs{v})+d(\bs{u}_{h}^{k+1}, \bs{v})) &= 0. 
\end{align}
\end{subequations}
%
Since \eqref{eq:fully-discrete-eqs} is a nonlinear system, existence of $(\bs{\sigma}_h^{k+1}, \bs{u}_h^{k+1})$ is not guaranteed. We use a fixed point theorem to prove existence of $(\bs{\sigma}_h^{k+1}, \bs{u}_h^{k+1})$.

\begin{thm}[Existence and uniqueness of fully discrete solutions]
    \label{thm:existence-uniqueness}
    Suppose that $\Delta t$ is sufficiently small to satisfy 
    \begin{align} \label{eq:small-delta-t}
        L \Delta t < 2
    \end{align}
    where $L>0$ is the constant of Lipschitz continuity of $d(\cdot, \cdot)$ in \eqref{eq:lipschitz-continuity}. Then, there exists a unique $(\bs{\sigma}_h^{k+1}, \bs{u}_h^{k+1}) \in \bs{\Sigma_h} \times \bs{V}_h$ satisfying \eqref{eq:fully-discrete-eqs}.
\end{thm}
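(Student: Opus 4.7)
The plan is to apply the Banach fixed point theorem on $\bs{V}_h$, viewing \eqref{eq:fully-discrete-eqs} as a perturbation of a linear saddle-point problem by a Lipschitz nonlinear source. Given a guess $\bs{w} \in \bs{V}_h$ that plays the role of $\bs{u}_h^{k+1}$ inside the nonlinearity only, I will solve for $(\bs{\sigma}, \bs{u}) \in \bs{\Sigma}_h \times \bs{V}_h$ from the linear system
\begin{align*}
\tfrac{1}{2} a(\bs{\sigma}, \bs{\tau}) - \tfrac{1}{2} b(\bs{\tau}, \bs{u}) &= -\tfrac{1}{2} a(\bs{\sigma}_h^k, \bs{\tau}) + \tfrac{1}{2} b(\bs{\tau}, \bs{u}_h^k), \\
\tfrac{1}{2} b(\bs{\sigma}, \bs{v}) + \tfrac{1}{\Delta t} c(\bs{u}, \bs{v}) &= -\tfrac{1}{2} b(\bs{\sigma}_h^k, \bs{v}) + \tfrac{1}{\Delta t} c(\bs{u}_h^k, \bs{v}) + \tfrac{1}{2}\bigl( d(\bs{u}_h^k, \bs{v}) + d(\bs{w}, \bs{v})\bigr)
\end{align*}
for all $(\bs{\tau}, \bs{v}) \in \bs{\Sigma}_h \times \bs{V}_h$, and set $\Phi(\bs{w}) := \bs{u}$. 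Any fixed point of $\Phi$ gives a solution of \eqref{eq:fully-discrete-eqs}, so the theorem reduces to showing $\Phi$ is a well-defined contraction on $\bs{V}_h$ under \eqref{eq:small-delta-t}.

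Well-posedness of $\Phi$ is a short energy argument on the square finite-dimensional linear system. For the homogeneous problem, testing with $(\bs{\tau}, \bs{v}) = (\bs{\sigma}, \bs{u})$ and adding the two equations, the mixed $b$-terms cancel and one obtains
\begin{align*}
\tfrac{1}{2} a(\bs{\sigma}, \bs{\sigma}) + \tfrac{1}{\Delta t}\|\bs{u}\|_{L^2(\Omega)}^2 = 0,
\end{align*}
which forces $\bs{u} = \mathbf{0}$ (since $c(\bs{u},\bs{u}) = \|\bs{u}\|_{L^2(\Omega)}^2$) and $\bs{\sigma} = \mathbf{0}$ (since $a(\cdot,\cdot)$ is coercive on $\bs{\Sigma}_h$ by $\kappa_i > 0$ and $K_i > 0$). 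Uniqueness plus finite dimensionality yields existence, so $\Phi$ is well defined.

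The core step is the contraction estimate. For two guesses $\bs{w}_1, \bs{w}_2 \in \bs{V}_h$ with linearized solutions $(\bs{\sigma}_j, \bs{u}_j)$, set $\delta\bs{\sigma} := \bs{\sigma}_1 - \bs{\sigma}_2$ and $\delta\bs{u} := \bs{u}_1 - \bs{u}_2$. Subtracting the two linear problems, testing with $(\delta\bs{\sigma}, \delta\bs{u})$, and adding the resulting equations (the $b$-contributions again cancel) gives
\begin{align*}
\tfrac{1}{2} a(\delta\bs{\sigma}, \delta\bs{\sigma}) + \tfrac{1}{\Delta t}\|\delta\bs{u}\|_{L^2(\Omega)}^2 = \tfrac{1}{2}\bigl(d(\bs{w}_1, \delta\bs{u}) - d(\bs{w}_2, \delta\bs{u})\bigr).
\end{align*}
Discarding the nonnegative $a$-term and bounding the right-hand side by the Lipschitz hypothesis \eqref{eq:lipschitz-continuity} combined with Cauchy--Schwarz produces $\|\delta\bs{u}\|_{L^2(\Omega)} \le (L\Delta t/2)\,\|\bs{w}_1 - \bs{w}_2\|_{L^2(\Omega)}$, which is a strict contraction exactly under the smallness assumption \eqref{eq:small-delta-t}. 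Banach's fixed point theorem then furnishes a unique $\bs{u}_h^{k+1} \in \bs{V}_h$, and \eqref{eq:fully-discrete-eq1} determines $\bs{\sigma}_h^{k+1}$ uniquely afterward.

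The only real obstacle is choosing the right linearization: the nonlinear term $d$ must be frozen while the rest of the Crank--Nicolson coupling is kept implicit, so that the cross terms cancel in the energy identity and the contraction constant comes out as $L\Delta t/2$, matching \eqref{eq:small-delta-t}. The coercivity of $a$ on $\bs{\Sigma}_h$ and the inf-sup property \eqref{eq:inf-sup} enter only in routine ways, and no additional regularity beyond the Lipschitz assumption on $d$ is required.
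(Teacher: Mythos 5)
Your proof is correct and follows essentially the same route as the paper: a Banach fixed-point argument in which the nonlinearity $d$ is frozen at the previous iterate, an energy identity in which the $b$-terms cancel, and the Lipschitz bound producing the contraction factor $L\Delta t/2$, matching \eqref{eq:small-delta-t}. Your only (minor, welcome) refinements are that you run the contraction on $\bs{V}_h$ alone rather than on $\bs{\Sigma}_h \times \bs{V}_h$ equipped with the weighted norm $\left(\Delta t\, a(\bs{\tau},\bs{\tau}) + 2\|\bs{v}\|_{L^2(\Omega)}^2\right)^{1/2}$, and that you explicitly verify solvability of each linearized saddle-point system via the homogeneous energy argument, a step the paper leaves implicit.
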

\begin{proof}
Recall the fully discrete scheme.
\begin{align*}
    &\frac12 a(\bs{\sigma}_{h}^{k} + \bs{\sigma}_{h}^{k+1}, \bs{\tau}) - \frac12 b(\bs{\tau}, \bs{u}_{h}^{k} + \bs{u}_{h}^{k+1})=0,\\
    &\frac12 b(\bs{\sigma}_{h}^{k}+\bs{\sigma}_{h}^{k+1}, \bs{v}) + c \left(\frac{\bs{u}_{h}^{k+1} - \bs{u}_{h}^{k}}{\Delta t} , \bs{v} \right)\\
    &\quad- \frac12(d(\bs{u}_{h}^{k},\bs{v}) + d(\bs{u}_{h}^{k+1}, \bs{v})) = 0. 
\end{align*}
Assuming that $\bs{\sigma}_{h}^{k}$, $\bs{u}_{h}^{k}$ are given, the system \eqref{eq:fully-discrete-eqs} is to find $(\bs{\sigma}_{h}^{k+1}, \bs{u}_{h}^{k+1})$ such that
\begin{align*}
    &\Delta t(a(\bs{\sigma}_{h}^{k+1},\bs{\tau}) + b(\bs{\tau},\bs{u}_{h}^{k+1}) - b(\bs{\sigma}_{h}^{k+1},\bs{v})) + 2c(\bs{u}_{h}^{k+1},\bs{v}) - \Delta t d(\bs{u}_{h}^{k+1},\bs{v})
    \\
    &\quad=-\Delta t(a(\bs{\sigma}_{h}^{k},\bs{\tau}) + b(\bs{\tau},\bs{u}_{h}^{k} - b(\bs{\sigma}_{h}^{k},\bs{v}))+ 2c(\bs{u}_{h}^{k},\bs{v}) + \Delta t d(\bs{u}_{h}^{k},\bs{v})
    \\
    \\
    &\quad=: G^k(\bs{\tau},\bs{v})
\end{align*}
for all $(\bs{\tau},\bs{v}) \in \bs{\Sigma}_h \times \bs{V}_h$. For simplicity, let $\Phi_{\Delta t}:\bs{\Sigma}_h \times \bs{V}_h \to \bs{\Sigma}_h \times \bs{V}_h$ be a map defined by 
\begin{align*}
    \left<\Phi_{\Delta t}(\bs{\sigma}_h,\bs{u}_h), (\bs{\tau},\bs{v})\right>_{\bs{\Sigma}_h\times \bs{V}_h} = \Delta t(a(\bs{\sigma}_h,\bs{\tau}) + b(\bs{\tau},\bs{u}_h) - b(\bs{\sigma}_h,\bs{v})) + 2c(\bs{u}_h,\bs{v})
\end{align*}
and the above equation can be written by
\begin{align*}
    \left<\Phi_{\Delta t}(\bs{\sigma}_{h}^{k+1},\bs{u}_{h}^{k+1}), (\bs{\tau},\bs{v}) \right>_{\bs{\Sigma}_h\times \bs{V}_h} - \Delta t d(\bs{u}_{h}^{k+1},\bs{v}) = G^k(\bs{\tau},\bs{v})
\end{align*}
Define $(\bs{\sigma}_{h,0}^{k+1},\bs{u}_{h,0}^{k+1})$ by
\begin{align*}
    \left<\Phi_{\Delta t} (\bs{\sigma}_{h,0}^{k+1},\bs{u}_{h,0}^{k+1}), (\bs{\tau},\bs{v}) \right>_{\bs{\Sigma}_h\times \bs{V}_h} = G^k(\bs{\tau},\bs{v}) \quad \forall (\bs{\tau},\bs{v})\in \bs{\Sigma}_h\times \bs{V}_h
\end{align*}
and define $\{(\bs{\sigma}_{h,m}^{k+1},\bs{u}_{h,m}^{k+1})\}_{m=1}^{\infty}$ by
\begin{align*}
    \left<\Phi_{\Delta t} (\bs{\sigma}_{h,m+1}^{k+1},\bs{u}_{h,m+1}^{k+1}), (\bs{\tau},\bs{v}) \right>_{\bs{\Sigma}_h\times \bs{V}_h} - \Delta t d(\bs{u}_{h,m}^{k+1},\bs{v}) = G^k(\bs{\tau},\bs{v}) 
\end{align*}
for all $(\bs{\tau},\bs{v})\in \bs{\Sigma}_h\times \bs{V}_h$ and for $m\ge 0$. By taking difference of the above equation for $m$, $m+1$,
\begin{align*}
    &\left<\Phi_{\Delta t} (\bs{\sigma}_{h,m+1}^{k+1} - \bs{\sigma}_{h,m}^{k+1}, \bs{u}_{h,m+1}^{k+1} - \bs{u}_{h,m}^{k+1}), (\bs{\tau},\bs{v})\right>_{\bs{\Sigma}_h\times \bs{V}_h}\\
    &\quad= \Delta t(d(\bs{u}_{h,m}^{k+1},\bs{v}) - d(\bs{u}_{h,m-1}^{k+1},\bs{v}) )
\end{align*}
for all $(\bs{\tau},\bs{v})\in \bs{\Sigma}_h\times \bs{V}_h$. By Lipschitz continuity of the nonlinearity \eqref{eq:lipschitz-continuity} of $d$,
\begin{align*}
    |d(\bs{u}_{h,m}^{k+1},\bs{v}) - d(\bs{u}_{h,m-1}^{k+1},\bs{v})| \le L\|\bs{u}_{h,m}^{k+1} - \bs{u}_{h,m-1}^{k+1}\|_{L^2(\Omega)} \|\bs{v}\|_{L^2(\Omega)} .
\end{align*}
If $\Delta t$ is small enough to satisfy $\Delta t L < 2$, then
\begin{align*}
    &\left<\Phi_{\Delta t} (\bs{\sigma}_{h,m+1}^{k+1} - \bs{\sigma}_{h,m}^{k+1}, \bs{u}_{h,m+1}^{k+1} - \bs{u}_{h,m}^{k+1}),(\bs{\sigma}_{h,m+1}^{k+1} - \bs{\sigma}_{h,m}^{k+1}, \bs{u}_{h,m+1}^{k+1} - \bs{u}_{h,m}^{k+1}))\right>_{\bs{\Sigma}_h\times \bs{V}_h}\\
    &\quad\le \Delta tL\|\bs{u}_{h,m}^{k+1} - \bs{u}_{h,m-1}^{k+1}\|_{L^2(\Omega)} \|\bs{u}_{h,m+1}^{k+1} - \bs{u}_{h,m}^{k+1}\|_{L^2(\Omega)}\\
    &\quad < 2\|\bs{u}_{h,m}^{k+1} - \bs{u}_{h,m-1}^{k+1}\|_{L^2(\Omega)} \|\bs{u}_{h,m+1}^{k+1} - \bs{u}_{h,m}^{k+1}\|_{L^2(\Omega)} .
\end{align*}
By the definition of $\Phi_{\Delta t}$,
\begin{align*}
    &\left<\Phi_{\Delta t} (\bs{\sigma}_{h,m+1}^{k+1} - \bs{\sigma}_{h,m}^{k+1}, \bs{u}_{h,m+1}^{k+1} - \bs{u}_{h,m}^{k+1}),(\bs{\sigma}_{h,m+1}^{k+1} - \bs{\sigma}_{h,m}^{k+1}, \bs{u}_{h,m+1}^{k+1} - \bs{u}_{h,m}^{k+1}))\right>_{\bs{\Sigma}_h\times \bs{V}_h}\\
    &\quad = \Delta t a(\bs{\sigma}_{h,m+1}^{k+1} - \bs{\sigma}_{h,m}^{k+1},\bs{\sigma}_{h,m+1}^{k+1} - \bs{\sigma}_{h,m}^{k+1}) + 2\|\bs{u}_{h,m+1}^{k+1} - \bs{u}_{h,m}^{k+1}\|_{L^2(\Omega)}^{2}.
\end{align*}

The above inequality and equality imply that $\Phi_{\Delta t}$ is a contraction on $\bs{\Sigma}_h \times \bs{V}_h$ with the norm $\|(\bs{\tau},\bs{v})\|_{\bs{\Sigma}_h\times \bs{V}_h} := (\Delta ta(\bs{\tau},\bs{\tau}) + 2\|\bs{v}\|_{L^2(\Omega)}^{2})^{1/2}$ if $\Delta tL < 2$.
Therefore, there is a unique fixed point $(\bs{\sigma}_{h,\infty}^{k+1}, \bs{u}_{h,\infty}^{k+1}) \in \bs{\Sigma}_h \times \bs{V}_h$ such that 
\begin{align*}
    \left\|\left(\bs{\sigma}_{h,m}^{k+1} - \bs{\sigma}_{h,\infty}^{k+1},\bs{u}_{h,m}^{k+1} - \bs{u}_{h,\infty}^{k+1} \right)\right\|_{\bs{\Sigma}_h\times \bs{V}_h} \to 0 \text{ as } m\to\infty .
\end{align*}
By the Banach contraction principle, this fixed point is unique, so the proof is completed.
\end{proof}

\section{A priori error estimates}
\label{sec:error-analysis}
For $T>0$ let $\Delta t=T/M$ for a natural number $M$ and define $\{t_k\}_{n=0}^{M}$ by $t_k=k\Delta t$. For a variable $g:[0,T] \to X$ for a Hilbert space $X$, we will use $g_{h}^{k}$ and $g^k$ for the numerical solution of $g$ at $t_k$ and $g(t_k)$, respectively. The variable $g$ can be $\bs{\sigma}$, $\bs{u}$ in the problem. 
For simplicity we will also use the definitions
\begin{align*}
    \bar{\partial}_t g^{k+\frac12} := \frac{1}{\Delta t}(g^{k+1} - g^k), \quad g^{k+\frac12} := \frac12(g^k+g^{k+1})
\end{align*}
for any sequence $\{v^k\}_{k=0}^{M}$ of functions with upper index $k$.

Let $\Pi_h: H^1(\Omega; \mathbb{R}^d) \to \Sigma_h$ be the canonical interpolation operator of the Raviart--Thomas element or  the Nedelec $H(\div)$ element of the 1st kind. If $P_h$ is the $L^2$ projection to $V_h$, then $(\Pi_h, P_h)$ satisfies the commuting diagram property
\begin{align} \label{eq:commuting-diagram}
    \div \Pi_h \tau = P_h \div \tau, \qquad \tau \in H^1(\Omega, \mathbb{R}^d) .
\end{align}
On every facet $F$ in $\mathcal{T}_h$ and a normal vector $n_F$ on $F$, 
\algn{ \label{eq:normal-projection}
    \int_F (\tau - \Pi_h \tau) \cdot n_F q \,ds = 0 \quad \forall q \in \mathcal{P}_{l-1}(F) .
}
By extending $\Pi_h$ and $P_h$ to the $N$-copies of $H^1(\Omega;\mathbb{R}^d)$ and $L^2(\Omega)$, we define 
\begin{align*}
    \bs{\Pi}_h : \underbrace{H^1(\Omega; \mathbb{R}^d) \times \cdots \times H^1(\Omega; \mathbb{R}^d)}_{N \text{ tuples}}  \to \bs{\Sigma}_h, \quad \bs{P}_h : \bs{V} \to \bs{V}_h .
\end{align*}
Let
\begin{align}
    \label{eq:e_sigma-def}
    e_{\bs{\sigma}}^{k} &:= {\bs{\sigma}}^k - {\bs{\sigma}}_{h}^{k} = ( {\sigma}_1^k - {\sigma}_{1,h}^{k}, \cdots, {\sigma}_N^k - {\sigma}_{N,h}^{k}) ,
    \\
    \label{eq:e_u-def}
    e_{\bs{u}} &:= {\bs{u}}^k - {\bs{u}}_{h}^{k} = ( u_1^k - u_{1,h}^{k}, \cdots, u_N^k - u_{N,h}^{k}) , 
\end{align}
and define $e_{\bs{\sigma}}^{h,k}, e_{\bs{\sigma}}^{I,k}, e_{\bs{u}}^{h,k}, e_{\bs{u}}^{I,k}$ by
\begin{align*}
    e_{\bs{\sigma}}^{h,k} :={\bs{\Pi}}_{h} {\bs{\sigma}}^k - {\bs{\sigma}}_{h}^{k}, \quad e_{\bs{u}}^{h,k} := {\bs{P}}_{h} {\bs{u}}^k - {\bs{u}}_{h}^{k},
    \\
    e_{\bs{\sigma}}^{I,k} :={\bs{\Pi}}_{h} {\bs{\sigma}}^k - {\bs{\sigma}}_{h}^{k}, \quad e_{\bs{u}}^{I,k} := {\bs{P}}_{h} {\bs{u}}^k - {\bs{u}}^k .
\end{align*}
By a standard approximation theory of interpolation operators, assuming that $\sigma_i^k \in H^r(\Omega; \mathbb{R}^d)$ and $u_i^k \in H^s(\Omega)$ with $r > 1/2$, $s 
\ge 0$,
\algn{ 
    \label{eq:sigma-approx}
    \| {\sigma}_i^k - \Pi_h {\sigma}_i^k  \|_{L^2(\Omega)} &\le Ch^{m} \| \sigma_i^k \|_{H^r(\Omega)} & & \frac 12 < m \le \max\{l, r\}
    \\
    \label{eq:u-approx}
    \| {u}_i^k - P_h u_i^k  \|_{L^2(\Omega)} &\le Ch^{s} \| u_i^k \|_{H^s(\Omega)} & & 0 \le m \le \max\{l, s\} .
}
As immediate extensions, 
\algn{ 
    \label{eq:Pi-approx}
    \| \bs{\sigma}_i^k - \bs{\Pi}_h \bs{\sigma}_i^k  \|_{L^2(\Omega)} &\le Ch^{m} \| \bs{\sigma}^k \|_{H^r(\Omega)} & & \frac 12 < m \le \max\{l, r\}
    \\
    \label{eq:Ph-approx}
    \| \bs{u}^k - \bs{P}_h \bs{u}^k  \|_{L^2(\Omega)} &\le Ch^{s} \| \bs{u}^k \|_{H^s(\Omega)} & & 0 \le m \le \max\{l, s\} .
}
%
By the commuting diagram property \eqref{eq:commuting-diagram} and the property $\div \Sigma_{h} = V_{h}$, 
\begin{subequations}
\label{eq:b-cancellation-eqs}
\begin{align} 
    \label{eq:b-cancellation-eq1}
    b(e_{\bs{\sigma}}^{I,k}, \bs{v}) &= 0 \quad \forall \bs{v} \in \bs{V}_h, 
    \\
    \label{eq:b-cancellation-eq2}
    b(\bs{\tau}, e_{\bs{u}}^{I,k}) &= 0 \quad \forall \bs{\tau} \in \bs{\Sigma}_h .
\end{align}
\end{subequations}
Here we recall a discrete Gr\"{o}nwall inequality before we begin our proof of error estimates (cf. \cite{Heywood-Rannacher:1982,Riviere:book}).
\begin{thm} \label{thm:gronwall-ineq}
Let $\Delta t > 0$, $B, C > 0$ and $\{a_k\}_k$, $\{b_k\}_k$, $\{c_k\}_k$ be sequences of non-negative numbers satisfying 
\begin{align}
    \label{eq:gronwall-condition}
    a_k + \Delta t \sum_{i=0}^{k} b_i \le B + C\Delta t \sum_{i=0}^{k} a_i +  \sum_{i=0}^{k} c_i 
\end{align}
for all $k \ge 0$. Then, if $C\Delta t < 1$,
\begin{align}
    \label{eq:gronwall-conclusion}
    a_k + \Delta t \sum_{i=0}^{k} b_i \le e^{C(k+1)\Delta t} \LRp{B +  \sum_{i=0}^{k} c_i} .
\end{align}
\end{thm}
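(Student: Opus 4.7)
The plan is to isolate $a_k$ on the left-hand side of the hypothesis \eqref{eq:gronwall-condition} and then iterate the resulting recurrence. I would introduce the shorthand $D_k := B + \sum_{i=0}^k c_i$, which is non-decreasing in $k$ since each $c_i \ge 0$, and set $A_k := a_k + \Delta t \sum_{i=0}^k b_i$. Splitting off the $i=k$ term from $C\Delta t \sum_{i=0}^k a_i$ on the right of \eqref{eq:gronwall-condition} and moving it to the left (which is legitimate precisely because $C\Delta t < 1$), I obtain
\begin{equation*}
    (1 - C\Delta t) a_k + \Delta t \sum_{i=0}^k b_i \le D_k + C\Delta t \sum_{i=0}^{k-1} a_i.
\end{equation*}
Dividing by $1 - C\Delta t$ and using $a_i \le A_i$ on the right, this becomes
\begin{equation*}
    A_k \le \frac{D_k}{1-C\Delta t} + \frac{C\Delta t}{1-C\Delta t} \sum_{i=0}^{k-1} A_i.
\end{equation*}

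Next I would prove by induction on $k$ that $A_k \le (1-C\Delta t)^{-(k+1)} D_k$. The base case $k=0$ reads $A_0 \le D_0/(1-C\Delta t)$, which follows immediately from the displayed recurrence (the sum is empty). For the inductive step, I substitute the induction hypothesis $A_i \le (1-C\Delta t)^{-(i+1)} D_i$ into the sum, use monotonicity $D_i \le D_k$ to pull $D_k$ out, recognize the remainder as a geometric series in $(1-C\Delta t)^{-1}$, and apply the algebraic identity $1 + \frac{C\Delta t}{1-C\Delta t} = \frac{1}{1-C\Delta t}$; a short calculation collapses all terms into the single factor $(1-C\Delta t)^{-(k+1)} D_k$.

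Finally, I convert the algebraic factor $(1-C\Delta t)^{-(k+1)}$ into the exponential form asserted in \eqref{eq:gronwall-conclusion} via the elementary inequality $(1-x)^{-1} \le e^{x/(1-x)}$ valid for $0<x<1$, which gives $(1-C\Delta t)^{-(k+1)} \le e^{C(k+1)\Delta t/(1-C\Delta t)}$. This matches \eqref{eq:gronwall-conclusion} either after absorbing the $(1-C\Delta t)^{-1}$ in the exponent into the constant $C$, or, under the slightly stronger but standard tacit assumption $C\Delta t \le 1/2$, via $(1-C\Delta t)^{-1} \le 1 + 2C\Delta t \le e^{2C\Delta t}$.

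The main obstacle is entirely bookkeeping rather than analytical: the induction step requires care that the geometric sum telescopes precisely into $(1-C\Delta t)^{-(k+1)}$ with no leftover constants, and the argument depends critically on $D_i \le D_k$, which in turn uses $c_i \ge 0$. Given the structure, the argument is a straightforward discrete analogue of the classical Grönwall differential inequality, and I expect no deeper subtlety beyond these algebraic manipulations.
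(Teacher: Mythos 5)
Your argument is sound and, notably, is more than the paper itself supplies: the paper gives no proof of Theorem~\ref{thm:gronwall-ineq} at all, only a citation to \cite{Heywood-Rannacher:1982} plus a remark that the $c_i$-sum here lacks the factor $\Delta t$. Your route --- absorb the $i=k$ term using $C\Delta t<1$, then induct directly to the sharp geometric bound $A_k\le(1-C\Delta t)^{-(k+1)}D_k$ --- differs from the Heywood--Rannacher induction, which applies $1+\sigma C\Delta t\le e^{\sigma C\Delta t}$ with $\sigma=(1-C\Delta t)^{-1}$ at each step and lands immediately on an exponential bound. Your version buys the extremal constant (equality in \eqref{eq:gronwall-condition} with $b_i=c_i=0$, $B=1$ forces $a_k=(1-C\Delta t)^{-(k+1)}$, so your geometric factor cannot be improved), and your $D_k$ bookkeeping with $c_i\ge 0$ transparently handles the modified $c_i$-term that the paper's remark is about. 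All the individual steps check out: the absorption and division step, the induction with $q-1=C\Delta t/(1-C\Delta t)$ and the telescoping geometric sum, and the inequalities $(1-x)^{-1}\le e^{x/(1-x)}$ and $(1-x)^{-1}\le 1+2x$ for $x\le 1/2$ are all valid.

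The one genuine issue is not in your proof but in the statement, and you were right to hesitate at the last step: \eqref{eq:gronwall-conclusion} is false as printed, so no argument can close the gap you noticed. Since $e^{-x}>1-x$ for $x>0$, the extremal sequence above satisfies $a_k=(1-C\Delta t)^{-(k+1)}>e^{C(k+1)\Delta t}$ for every $C\Delta t\in(0,1)$; the correct conclusion, as in \cite{Heywood-Rannacher:1982}, must carry $\sigma=(1-C\Delta t)^{-1}$ in the exponent, which is exactly your $e^{C(k+1)\Delta t/(1-C\Delta t)}$, or $e^{2C(k+1)\Delta t}$ under the restriction $C\Delta t\le 1/2$ (compatible with the smallness assumptions already in force, cf.\ \eqref{eq:small-delta-t}). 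Your proposal to absorb this into the constant is the right repair and is harmless downstream: the application in Theorem~\ref{thm:eh-estimate} only needs an exponential bound with a constant depending on $L$ and $T$, not the literal exponent $C(k+1)\Delta t$. You should simply state and prove the corrected conclusion (with $\sigma$ or the factor $2$ in the exponent) rather than \eqref{eq:gronwall-conclusion} verbatim.
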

\begin{rmk}
    We remark that \eqref{eq:gronwall-condition} and \eqref{eq:gronwall-conclusion} are slightly different in \cite{Heywood-Rannacher:1982}. In particular, the summation $\sum_{i=0}^k c_i$ is $\Delta t \sum_{i=0}^{k} c_i$ in \cite{Heywood-Rannacher:1982} but we can show that \eqref{eq:gronwall-condition} implies \eqref{eq:gronwall-conclusion} with the same proof.
\end{rmk}
\begin{thm}
    \label{thm:eh-estimate}
    Suppose that a pair $\bs{\sigma}= (\sigma_1, \cdots, \sigma_N)$, $\bs{u} = (u_1, \cdots, u_N)$ is a solution of \eqref{eq:variational-eqs}. Suppose also that the assumption of Theorem~\ref{thm:existence-uniqueness} holds, and the sequence $\{( \bs{\sigma}_h^k, \bs{u}_h^k)\}_k$ is a solution of  \eqref{eq:fully-discrete-eqs} for given numerical initial data $(\bs{\sigma}_h^0, \bs{u}_h^0) \in \bs{\Sigma}_h \times \bs{V}_h$ satisfying  $a(\bs{\sigma}_{h}^{0}, \bs{\tau}) + b(\bs{\tau}, \bs{u}_{h}^{0}) = 0$.
    %
    %
    Recall the definitions of $e_{\bs{\sigma}}^{h,k}$ and $e_{\bs{u}}^{h,k}$ in \eqref{eq:e_sigma-def}, \eqref{eq:e_u-def}. If $0<\Delta t < C_L$ for $C_L$ depending on $L$, then  
    %
    \algns{
        \notag
    &\|e_{\bs{u}}^{h,k}\|_{L^{2}(\Omega)}^{2} + \frac{\Delta t}{4} \sum_{m=0}^{k-1} a (e_{\bs{\sigma}}^{h,m} + e_{\bs{\sigma}}^{h,m+1}, e_{\bs{\sigma}}^{h,m} + e_{\bs{\sigma}}^{h,m+1})
    \\
    & +a(e_{\bs{\sigma}}^{h,k}, e_{\bs{\sigma}}^{h,k}) + \frac 1{2\Delta t} \sum_{m=0}^{k-1} \| e_{\bs{u}}^{m+1} - e_{\bs{u}}^{h,m} \|_{L^2(\Omega)}^2
    \\
    &\quad \le \|e_{\bs{u}}^{h,0}\|_{L^2(\Omega)}^{2} + a(e_{\bs{\sigma}}^{h,0}, e_{\bs{\sigma}}^{h,0}) 
    \\
    \notag
    &\qquad + C \LRp{ h^{2r} \|\bs{\sigma} , \bs{u}\|_{L^{\infty}(0,t_{k}; H^r(\Omega))}^2  +  (\Delta t)^4 \|\partial_{t}^3 \bs{u}\|_{L^{\infty}(0, t_{k}; L^2(\Omega))}^2 } 
    \\
    \notag
    &\qquad + Ch^{2r}  \| \partial_t \bs{\sigma} \|_{L^{\infty}(0,t_{k}; H^r(\Omega))}^2 + C h^{2r} \sum \| {\bs{u}} \|_{L^\infty(0,t_{k};H^r(\Omega))}^2 
    \\
    }
    for $\frac 12 < r \le l$.
\end{thm}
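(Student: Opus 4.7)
The plan is to derive two complementary energy estimates from the error equations, one using a ``standard'' test function and one using a time-derivative test function, and add them so that all four quantities on the left of the conclusion appear simultaneously. First, I would subtract the fully discrete scheme \eqref{eq:fully-discrete-eqs} from the average of \eqref{eq:variational-eqs} at $t_{k}$ and $t_{k+1}$. Invoking the interpolation cancellations \eqref{eq:b-cancellation-eq1}, \eqref{eq:b-cancellation-eq2} together with the $L^{2}$-orthogonality $c(\bar{\partial}_{t}e_{\bs{u}}^{I,k+\frac{1}{2}},\bs{v})=0$ for $\bs{v}\in\bs{V}_{h}$, this yields
\begin{align*}
    a(e_{\bs{\sigma}}^{h,k+\frac{1}{2}},\bs{\tau}) - b(\bs{\tau},e_{\bs{u}}^{h,k+\frac{1}{2}}) &= a(e_{\bs{\sigma}}^{I,k+\frac{1}{2}},\bs{\tau}), \\
    c(\bar{\partial}_{t}e_{\bs{u}}^{h,k+\frac{1}{2}},\bs{v}) + b(e_{\bs{\sigma}}^{h,k+\frac{1}{2}},\bs{v}) &= \tfrac{1}{2}\bigl(d(\bs{u}^{k},\bs{v})+d(\bs{u}^{k+1},\bs{v})-d(\bs{u}_{h}^{k},\bs{v})-d(\bs{u}_{h}^{k+1},\bs{v})\bigr) - T^{k}(\bs{v}),
\end{align*}
with the Crank--Nicolson truncation $T^{k}(\bs{v}) = c\bigl(\tfrac{1}{2}(\partial_{t}\bs{u}(t_{k})+\partial_{t}\bs{u}(t_{k+1})) - \bar{\partial}_{t}\bs{u}^{k+\frac{1}{2}},\bs{v}\bigr)$ satisfying $\|T^{k}\|_{L^{2}} \le C(\Delta t)^{2}\|\partial_{t}^{3}\bs{u}\|_{L^{\infty}(t_{k},t_{k+1};L^{2})}$ by Taylor expansion. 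Starting from the initial compatibility condition on $(\bs{\sigma}_{h}^{0},\bs{u}_{h}^{0})$ and inducting in $k$ with the averaged identity, the first error equation in fact holds \emph{pointwise}: $a(e_{\bs{\sigma}}^{h,k},\bs{\tau}) - b(\bs{\tau},e_{\bs{u}}^{h,k}) = a(e_{\bs{\sigma}}^{I,k},\bs{\tau})$ for all $\bs{\tau}\in\bs{\Sigma}_{h}$ and all $k$.

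For estimate~(I), I would test the first equation with $\bs{\tau} = e_{\bs{\sigma}}^{h,k+\frac{1}{2}}$ and the second with $\bs{v} = e_{\bs{u}}^{h,k+\frac{1}{2}}$ so that the $b$-terms cancel. Since $c(\bar{\partial}_{t}e_{\bs{u}}^{h,k+\frac{1}{2}},e_{\bs{u}}^{h,k+\frac{1}{2}}) = \frac{1}{2\Delta t}(\|e_{\bs{u}}^{h,k+1}\|_{L^{2}}^{2} - \|e_{\bs{u}}^{h,k}\|_{L^{2}}^{2})$, multiplying by $\Delta t$ and summing in $m=0,\dots,k-1$ telescopes to deliver $\|e_{\bs{u}}^{h,k}\|_{L^{2}}^{2}$ and $\frac{\Delta t}{4}\sum_{m}a(e_{\bs{\sigma}}^{h,m}+e_{\bs{\sigma}}^{h,m+1},e_{\bs{\sigma}}^{h,m}+e_{\bs{\sigma}}^{h,m+1})$ on the left. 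For estimate~(II), I would test the second equation with $\bs{v} = \bar{\partial}_{t}e_{\bs{u}}^{h,k+\frac{1}{2}}$ and rewrite the resulting $b(e_{\bs{\sigma}}^{h,k+\frac{1}{2}},\bar{\partial}_{t}e_{\bs{u}}^{h,k+\frac{1}{2}})$ via the time difference of the pointwise first equation tested at $\bs{\tau} = e_{\bs{\sigma}}^{h,k+\frac{1}{2}}$. Symmetry of $a$ gives $a(\bar{\partial}_{t}e_{\bs{\sigma}}^{h,k+\frac{1}{2}},e_{\bs{\sigma}}^{h,k+\frac{1}{2}}) = \frac{1}{2\Delta t}(a(e_{\bs{\sigma}}^{h,k+1},e_{\bs{\sigma}}^{h,k+1}) - a(e_{\bs{\sigma}}^{h,k},e_{\bs{\sigma}}^{h,k}))$, so multiplying by $\Delta t$ and summing telescopes to $a(e_{\bs{\sigma}}^{h,k},e_{\bs{\sigma}}^{h,k})$ together with $\frac{\Delta t}{2}\sum_{m}\|\bar{\partial}_{t}e_{\bs{u}}^{h,m+\frac{1}{2}}\|_{L^{2}}^{2} = \frac{1}{2\Delta t}\sum_{m}\|e_{\bs{u}}^{h,m+1} - e_{\bs{u}}^{h,m}\|_{L^{2}}^{2}$. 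Adding (I) and (II) reassembles the four left-hand-side quantities of the theorem.

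The right-hand side is bounded in the standard way. The consistency pieces $a(e_{\bs{\sigma}}^{I,\cdot},\cdot)$ and $a(\bar{\partial}_{t}e_{\bs{\sigma}}^{I,\cdot},\cdot)$ are handled by Cauchy--Schwarz and Young's inequality, the latter combined with the representation $\bar{\partial}_{t}e_{\bs{\sigma}}^{I,k+\frac{1}{2}} = \frac{1}{\Delta t}\int_{t_{k}}^{t_{k+1}}\partial_{t}(\bs{\Pi}_{h}\bs{\sigma} - \bs{\sigma})\,dt$ and with \eqref{eq:Pi-approx} applied to both $\bs{\sigma}$ and $\partial_{t}\bs{\sigma}$, producing the $h^{2r}$ contributions; residual $a(e_{\bs{\sigma}}^{h,\cdot},\cdot)$ terms spawned by Young are absorbed into the left. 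The nonlinearity, after splitting $\bs{u}^{m} - \bs{u}_{h}^{m} = -e_{\bs{u}}^{I,m} + e_{\bs{u}}^{h,m}$ and invoking \eqref{eq:Ph-approx} and \eqref{eq:lipschitz-continuity}, gives $h^{2r}$ projection contributions, $\Delta t\sum_{m}\|e_{\bs{u}}^{h,m}\|_{L^{2}}^{2}$ contributions routed to Gr\"onwall, and $\|\bar{\partial}_{t}e_{\bs{u}}^{h,\cdot}\|_{L^{2}}^{2}$ contributions absorbed into the left with a small Young parameter. The truncation accumulates as $\Delta t\sum_{m}\|T^{m}\|_{L^{2}}^{2} \le C(\Delta t)^{4}\|\partial_{t}^{3}\bs{u}\|_{L^{\infty}(0,t_{k};L^{2})}^{2}$. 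A final application of the discrete Gr\"onwall inequality (Theorem~\ref{thm:gronwall-ineq}), legal under the smallness of $\Delta t$, closes the argument.

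The main obstacle is the coordination between the two energy estimates: producing $a(e_{\bs{\sigma}}^{h,k},e_{\bs{\sigma}}^{h,k})$ requires the $\bar{\partial}_{t}e_{\bs{u}}^{h,k+\frac{1}{2}}$-test and the \emph{pointwise} (not merely averaged) first error equation, whose validity rests on the initial compatibility of $(\bs{\sigma}_{h}^{0},\bs{u}_{h}^{0})$ and a step-by-step induction. A secondary but delicate point is the nonlinearity tested against $\bar{\partial}_{t}e_{\bs{u}}^{h,k+\frac{1}{2}}$: the Young parameter must be tuned so that the absorbed $\|\bar{\partial}_{t}e_{\bs{u}}^{h,\cdot}\|_{L^{2}}^{2}$ does not swamp the left-hand side, while the residual coefficient in front of $\Delta t\sum_{m}\|e_{\bs{u}}^{h,m}\|_{L^{2}}^{2}$, which depends on $L$, must remain compatible with the Gr\"onwall hypothesis $C\Delta t < 1$; this is exactly what the smallness assumption $\Delta t < C_{L}$ enforces.
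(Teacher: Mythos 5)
Your outline reproduces the paper's proof in all structural respects: the same two complementary energy estimates (averaged test functions for $\|e_{\bs{u}}^{h,k}\|_{L^2}^2$ and the summed $a$-norms; the time-difference test function together with the \emph{pointwise} first error equation for $a(e_{\bs{\sigma}}^{h,k},e_{\bs{\sigma}}^{h,k})$ and the difference-quotient sum), the same induction from the initial compatibility $a(\bs{\sigma}_h^0,\bs{\tau})+b(\bs{\tau},\bs{u}_h^0)=0$ to get that pointwise equation, the same splitting of the nonlinearity via $\bs{P}_h$, the same $O((\Delta t)^2)$ Crank--Nicolson truncation accumulating to $(\Delta t)^4$, and the same closing application of Theorem~\ref{thm:gronwall-ineq} under $\Delta t < C_L$. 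Your explicit identification of the two delicate points (the need for the pointwise, not merely averaged, first equation; the Young-parameter tuning against the Gr\"onwall hypothesis) is exactly where the paper's bookkeeping lives.

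There is, however, one concrete omission. You propose to handle the consistency pieces $a(e_{\bs{\sigma}}^{I,\cdot},\cdot)$ and $a(\bar{\partial}_t e_{\bs{\sigma}}^{I,\cdot},\cdot)$ by ``Cauchy--Schwarz and Young'' combined with \eqref{eq:Pi-approx}, but the form $a$ contains the membrane term $\sum_{i}\LRa{K_i^{-1}\tau_i\cdot n,\eta_i\cdot n}_{\Gamma}$, and a bare Cauchy--Schwarz in the $a$-inner product produces the factor $a(e_{\bs{\sigma}}^{I},e_{\bs{\sigma}}^{I})^{1/2}$, which includes $\|e_{\sigma_i}^{I}\cdot n\|_{L^2(\Gamma)}$. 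The estimate \eqref{eq:Pi-approx} controls only the $L^2(\Omega)$ part; the normal trace of the interpolation error is generically only $O(h^{r-1/2})$, so this route degrades the squared contribution to $h^{2r-1}$ and misses the stated $h^{2r}$ rate. The paper instead annihilates the interface term exactly: since $\tau_i\cdot n|_F\in\mathcal{P}_{l-1}(F)$ for $\tau_i\in\Sigma_{h,i}$ and $K_i$ is constant, the facet orthogonality \eqref{eq:normal-projection} of the Raviart--Thomas interpolant gives $\sum_i\LRa{K_i^{-1}e_{\sigma_i}^{I,k}\cdot n,\tau_i\cdot n}_{\Gamma}=0$, reducing $a(e_{\bs{\sigma}}^{I,k},\bs{\tau})$ to a pure $L^2(\Omega)$ pairing to which \eqref{eq:Pi-approx} applies; the same step is needed for your $J_1$-analogue with $\bar{\partial}_t e_{\bs{\sigma}}^{I}$. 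This is the one idea specific to the interface structure of the problem that your plan, as written, is missing.
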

\begin{proof}
Note that solutions of \eqref{eq:variational-eqs} satisfy
\begin{align*}
    \frac12 a(\bs{\sigma}^k + \bs{\sigma}^{k+1}, \bs{\tau}) - \frac12 b(\bs{\tau}, \bs{u}^{k} + \bs{u}^{k+1}) &= 0 ,
    \\
    \frac12 b(\bs{\sigma}^k + \bs{\sigma}^{k+1}, \bs{v}) + \frac12 c(\partial_t \bs{u}^k + \partial_t \bs{u}^{k+1}, \bs{v}) - \frac12(d(\bs{u}^k, \bs{v}) + d(\bs{u}^{k+1}, \bs{v})) &= 0
\end{align*}
for all $(\bs{\tau}, \bs{v}) \in \bs{\Sigma}_h \times \bs{V}_h$, $k \ge 0$.
The difference of the above equations and \eqref{eq:fully-discrete-eqs} gives
\begin{align*}
    &\frac12 a(e_{\bs{\sigma}}^{k}+ e_{\bs{\sigma}}^{k+1}, \bs{\tau}) - \frac12 b(\bs{\tau}, e_{\bs{u}}^{k} +e_{\bs{u}}^{k+1}) = 0,
    \\
    &\frac12 b(e_{\bs{\sigma}}^{k}+ e_{\bs{\sigma}}^{k+1},\bs{v}) + c \LRp{\frac12(\partial_t \bs{u}^k + \partial_t \bs{u}^{k+1}) - \frac{1}{\Delta t}(\bs{u}_{h}^{k+1} - \bs{u}_{h}^{k}), \bs{v}}
    \\
    &\quad - \frac12(d(\bs{u}^{k+1}, \bs{v}) - d(\bs{u}_{h}^{k+1}, \bs{v}) + d(\bs{u}^k, \bs{v}) - d(\bs{u}_{h}^{k}, \bs{v})) = 0 
\end{align*}
for all $(\bs{\tau}, \bs{v}) \in \bs{\Sigma}_h \times \bs{V}_h$.
Recalling that $e_{\bs{\sigma}}^{k} = e_{\bs{\sigma}}^{h,k} - e_{\bs{\sigma}}^{I,k}$, $e_{\bs{u}}^{k} = e_{\bs{u}}^{h,k} - e_{\bs{u}}^{I,k}$,
\begin{align*}
    &\frac12 a(e_{\bs{\sigma}}^{h,k} + e_{\bs{\sigma}}^{h,k+1}, \bs{\tau}) - \frac12 b(\bs{\tau}, e_{\bs{u}}^{h,k} + e_{\bs{u}}^{h,k+1}) 
    \\
    &\quad = \frac12 a(e_{\bs{\sigma}}^{I,k} + e_{\bs{\sigma}}^{I,k+1}, \bs{\tau}) - \frac12 b(\bs{\tau}, e_{\bs{u}}^{I,k}+e_{\bs{u}}^{I,k+1}),
    \\
    & \frac12 b(e_{\bs{\sigma}}^{h,k}+e_{\bs{\sigma}}^{h,k+1}, \bs{v}) + \frac{1}{\Delta t} c \left(e_{\bs{u}}^{h,k+1} - e_{\bs{u}}^{h,k}, \bs{v} \right)
    \\
    &\quad =c \left(\frac{1}{\Delta t}(\bs{u}^{k+1} - \bs{u}^k) - \frac12(\partial_t \bs{u}^k + \partial_t \bs{u}^{k+1}), \bs{v} \right) + \frac12 b \left(e_{\bs{\sigma}}^{I,k} + e_{\bs{\sigma}}^{I,k+1}, \bs{v} \right)
    \\
    &\qquad - \frac12 \left(d(\bs{u}_{h}^{k+1},\bs{v}) - d(\bs{u}^{k+1}, \bs{v}) + d(\bs{u}_{h}^{k}, \bs{v}) - d(\bs{u}^k, \bs{v}) \right) .
\end{align*}
By \eqref{eq:b-cancellation-eqs}, we can get reduced error equations
\begin{align*}
    &\frac12 a \left(e_{\bs{\sigma}}^{h,k} + e_{\bs{\sigma}}^{h,k+1}, \bs{\tau} \right) - \frac12 b \left(\bs{\tau}, e_{\bs{u}}^{h,k} + e_{\bs{u}}^{h,k+1} \right) = \frac12 a \left(e_{\bs{\sigma}}^{I,k} + e_{\bs{\sigma}}^{I,k+1}, \bs{\tau} \right),
    \\
    & \frac12 b \left(e_{\bs{\sigma}}^{h,k} + e_{\bs{\sigma}}^{h,k+1}, \bs{v} \right) + \frac{1}{\Delta t} c \left(e_{\bs{u}}^{h,k+1} - e_{\bs{u}}^{h,k}, \bs{v} \right)
    \\
    &=c \left(\frac{1}{\Delta t}(\bs{u}^{k+1} - \bs{u}^k) - \frac12(\partial_t \bs{u}^k + \partial_t \bs{u}^{k+1}), \bs{v} \right) 
    \\
    &\quad - \frac12 \left(d(\bs{u}_{h}^{k+1},\bs{v}) - d(\bs{u}^{k+1}, \bs{v}) + d(\bs{u}_{h}^{k}, \bs{v}) - d(\bs{u}^k, \bs{v}) \right).
\end{align*}
Take $\bs{\tau} = e_{\bs{\sigma}}^{h,k+1}+e_{\bs{\sigma}}^{h,k}$, $\bs{v} = e_{\bs{u}}^{h,k+1} + e_{\bs{u}}^{h,k}$ and add the equations and get
\begin{align*}
    &\frac12 a\left(e_{\bs{\sigma}}^{h,k+1}+e_{\bs{\sigma}}^{h,k}, e_{\bs{\sigma}}^{h,k+1}+e_{\bs{\sigma}}^{h,k}\right) + \frac{1}{\Delta t}\left(\|e_{\bs{u}}^{h,k+1}\|_{L^2(\Omega)}^{2} - \|e_{\bs{u}}^{h,k}\|_{L^2(\Omega)}^{2}\right)
    \\
    &\quad = \frac12 a \left(e_{\bs{\sigma}}^{I,k} + e_{\bs{\sigma}}^{I,k+1}, e_{\bs{\sigma}}^{h,k} + e_{\bs{\sigma}}^{h,k+1}\right) 
    \\
    &\qquad + c\left(\frac{1}{\Delta t}(\bs{u}^{k+1} - \bs{u}^k) - \frac12(\partial_t \bs{u}^k + \partial_t \bs{u}^{k+1}),e_{\bs{u}}^{h,k} + e_{\bs{u}}^{h,k+1}\right)
    \\
    &\qquad -\frac12 \left(d(\bs{u}_{h}^{k+1}, e_{\bs{u}}^{h,k} +e_{\bs{u}}^{h,k+1}) - d(\bs{u}^{k+1}, e_{\bs{u}}^{h,k}+e_{\bs{u}}^{h,k+1}) + d(\bs{u}_{h}^{k}, e_{\bs{u}}^{h,k} +e_{\bs{u}}^{h,k+1})\right)
    \\
    &\qquad + \frac12 d(\bs{u}^{k}, e_{\bs{u}}^{h,k}+e_{\bs{u}}^{h,k+1}).
\end{align*}
By multiplying $\Delta t$ and by a simple algebraic computation, 
\mltln{ \label{eq:eu-intm-1}
    \|e_{\bs{u}}^{h,k+1}\|_{L^2(\Omega)}^{2} + \frac{\Delta t}{2} a\left(e_{\bs{\sigma}}^{h,k+1}+e_{\bs{\sigma}}^{h,k}, e_{\bs{\sigma}}^{h,k+1}+e_{\bs{\sigma}}^{h,k} \right)
    \\
    = \|e_{\bs{u}}^{h,k}\|_{L^2(\Omega)}^{2} + \sum_{j=1}^6 I_j^k
}
where
\algn{
    \label{eq:I1}
    \notag
    I_1^k &:= {\frac{\Delta t}{2} a(e_{\bs{\sigma}}^{I,k} + e_{\bs{\sigma}}^{I,k+1}, e_{\bs{\sigma}}^{h,k} + e_{\bs{\sigma}}^{h,k+1})}
    \\
    \notag
    I_2^k &:= {c \left(\bs{u}^{k+1}- \bs{u}^{k} - \frac{\Delta t}{2} \left(\partial_t \bs{u}^k + \partial_t \bs{u}^{k+1} \right), e_{\bs{u}}^{h,k+1} + e_{\bs{u}}^{h,k} \right)}
    \\
    I_3^k &:= {\frac{\Delta t}{2} \left(d\left(\bs{u}_{h}^{k+1}, e_{\bs{u}}^{h,k+1}+e_{\bs{u}}^{h,k} \right) - d\left(\bs{P}_h\bs{u}^{k+1}, e_{\bs{u}}^{h,k+1} + e_{\bs{u}}^{h,k} \right) \right)}
    \\
    \label{eq:I2}
    I_4^k &:= {\frac{\Delta t}{2} \left( d\left(\bs{u}_{h}^{k}, e_{\bs{u}}^{h,k+1}+e_{\bs{u}}^{h,k} \right)- d\left(\bs{P}_h\bs{u}^{k}, e_{\bs{u}}^{h,k+1} + e_{\bs{u}}^{h,k} \right) \right)}
    \\
    \notag
    I_5^k &:= {\frac{\Delta t}{2} \left(d\left(\bs{P}_h\bs{u}^{k+1}, e_{\bs{u}}^{h,k+1} + e_{\bs{u}}^{h,k} \right) - d\left(\bs{u}^{k+1}, e_{\bs{u}}^{h,k+1}+e_{\bs{u}}^{h,k} \right) \right)}
    \\
    \notag
    I_6^k &:= {\frac{\Delta t}{2} \left( d\left(\bs{P}_h\bs{u}^{k}, e_{\bs{u}}^{h,k+1} + e_{\bs{u}}^{h,k} \right)- d\left(\bs{u}^{k}, e_{\bs{u}}^{h,k+1}+e_{\bs{u}}^{h,k} \right)\right)}.
}
If we take the summation of \eqref{eq:eu-intm-1} over $k$, then we can obtain 
\mltln{ \label{eq:eu-intm-2}
    \| e_{\bs{u}}^{h,k} \|_{L^2(\Omega)}^2 + \frac{\Delta t}{2} \sum_{m=0}^{k-1} a\left(e_{\bs{\sigma}}^{h,m+1}+e_{\bs{\sigma}}^{h,m}, e_{\bs{\sigma}}^{h,m+1}+e_{\bs{\sigma}}^{h,m} \right)
    \\
    =\| e_{\bs{u}}^{h,0} \|_{L^2(\Omega)}^2 + \sum_{m=0}^{k-1} \sum_{j=1}^6 I_j^m .
}
By the Lipschitz continuity assumption \eqref{eq:lipschitz-continuity} and the triangle inequality,
\begin{align}
    \label{eq:I1-estm}
    |I_{3}^{m}| &\le 2L\Delta t \|e_{\bs{u}}^{h,m+1}\|_{L^2(\Omega)}  \left(\|e_{\bs{u}}^{h,m}\|_{L^2(\Omega)} + \|e_{\bs{u}}^{h,m+1}\|_{L^2(\Omega)} \right) ,
    \\
    \label{eq:I2-estm}
    |I_{4}^{m}| &\le 2L\Delta t \|e_{\bs{u}}^{h,m}\|_{L^2(\Omega)}  \left(\|e_{\bs{u}}^{h,m}\|_{L^2(\Omega)} + \|e_{\bs{u}}^{h,m+1}\|_{L^2(\Omega)} \right) , 
\end{align}
so 
\algn{ \label{eq:I12-estm}
    |I_3^m + I_4^m| &\le 4 \Delta t L \LRp{\| e_{\bs{u}}^{h,m} \|_{L^2(\Omega)}^2 + \| e_{\bs{u}}^{h,m+1} \|_{L^2(\Omega)}^2 } .
}
By \eqref{eq:lipschitz-continuity}, \eqref{eq:Ph-approx}, the triangle inequality, and Young's inequality, 
\algn{
    \notag 
    |I_{5}^{m} + I_6^m| &\le \Delta t h^r C\left(\|\bs{u}^m\|_{H^r(\Omega)} + \|\bs{u}^{m+1}\|_{H^r(\Omega)}\right) \left(\|e_{\bs{u}}^{h,m} + e_{\bs{u}}^{h,m+1}\|_{L^2(\Omega)}\right) 
    \\
    \label{eq:I34-estm}
    &\le C\Delta t h^{2r} \|\bs{u}\|_{L^{\infty}(t_m,t_{m+1};H^r(\Omega))}^2 
    \\
    \notag 
    &\quad + \frac{\Delta t}{4} \left(\|e_{\bs{u}}^{h,m}\|_{L^2(\Omega)}^2 + \|e_{\bs{u}}^{h,m+1}\|_{L^2(\Omega)}^2 \right) .
}
Note that 
\algns{
    \sum_{i=1}^N \LRa{K_i^{-1} e_{\sigma_i}^{I,k} \cdot n, \tau_i \cdot n } = 0 \quad \forall \bs{\tau} \in \bs{\Sigma}_h
}
by \eqref{eq:normal-projection}. Then, \eqref{eq:Pi-approx}, the Cauchy--Schwarz and Young's inequalities give
\algn{
    \label{eq:I5-estm}
    |I_{1}^{m}| &= {\frac{\Delta t}{2} |a(e_{\bs{\sigma}}^{I,m} + e_{\bs{\sigma}}^{I,m+1}, e_{\bs{\sigma}}^{h,m} + e_{\bs{\sigma}}^{h,m+1})}|
    \\
    \notag 
    &\le  \frac{\Delta t}{2} \|e_{\bs{\sigma}}^{I,m} + e_{\bs{\sigma}}^{I,m+1}\|_{L^2(\Omega)} \| e_{\bs{\sigma}}^{h,m} + e_{\bs{\sigma}}^{h,m+1} \|_{L^2(\Omega)} 
    \\
    \notag
    &\le C\Delta t h^{2r} \|\bs{\sigma}\|_{L^{\infty}(t_m,t_{m+1}; H^r(\Omega))}^2 
    \\
    \notag 
    &\quad + \frac{\Delta t}{4} a (e_{\bs{\sigma}}^{h,m} + e_{\bs{\sigma}}^{h,m+1}, e_{\bs{\sigma}}^{h,m} + e_{\bs{\sigma}}^{h,m+1}).
}
Lastly, we can estimate $I_2^m$ by Cauchy--Schwarz and Young's inequalities,
\algn{
    \notag 
    |I_{2}^{m}|&\le C(\Delta t)^3 \|\partial_{t}^3 \bs{u}\|_{L^{\infty}(t_m, t_{m+1}; L^2(\Omega))} \|e_{\bs{u}}^{h,m} +e_{\bs{u}}^{h,m+1} \|_{L^2(\Omega)}
    \\
    \label{eq:I6-estm}
    &\le C (\Delta t)^5 \|\partial_{t}^3 \bs{u}\|_{L^{\infty}(t_m, t_{m+1}; L^2(\Omega))}^2 
    \\
    \notag
    &\quad + \frac {\Delta t}4 \LRp{ \|e_{\bs{u}}^{h,m} \|_{L^2(\Omega)}^2 + \|e_{\bs{u}}^{h,m+1} \|_{L^2(\Omega)}^2 } .
}
Applying \eqref{eq:I12-estm}, \eqref{eq:I34-estm}, \eqref{eq:I5-estm}, \eqref{eq:I6-estm} to \eqref{eq:eu-intm-2}, we get 
\algn{
    \notag
    &\|e_{\bs{u}}^{h,k}\|_{L^{2}(\Omega)}^{2} + \frac{\Delta t}{4} \sum_{m=0}^{k-1} a (e_{\bs{\sigma}}^{h,m} + e_{\bs{\sigma}}^{h,m+1}, e_{\bs{\sigma}}^{h,m} + e_{\bs{\sigma}}^{h,m+1})
    \\
    \label{eq:eu-intm-estm}
    &\quad \le \|e_{\bs{u}}^{h,0}\|_{L^2(\Omega)}^{2} + \Delta t \LRp{4L^2 + \frac 12 } \sum_{m=0}^{k-1} \LRp{ \|e_{\bs{u}}^{h,m} \|_{L^2(\Omega)}^2 + \|e_{\bs{u}}^{h,m+1} \|_{L^2(\Omega)}^2 } 
    \\
    \notag
    &\qquad + C \Delta t \sum_{m=0}^{k-1} \LRp{ h^{2r} \|\bs{\sigma} , \bs{u}\|_{L^{\infty}(t_m,t_{m+1}; H^r(\Omega))}^2   +  (\Delta t)^4 \|\partial_{t}^3 \bs{u}\|_{L^{\infty}(t_m, t_{m+1}; L^2(\Omega))}^2 } .
}
%
%

Recall that $a(\bs{\sigma}_{h}^{0}, \bs{\tau}) + b(\bs{\tau}, \bs{u}_{h}^{0}) = 0$ as a condition of numerical initial data. Combining this with the fully discrete scheme, we can get 
\algns{
    a(e_{\bs{\sigma}}^{k}, \bs{\tau}) - b(\bs{\tau}, e_{\bs{u}}^{k} ) = 0, \quad \forall k \ge 0.
}
The difference of $k$ and $(k+1)$ time step of the above error equations is 
\algns{
    &\frac12 a(e_{\bs{\sigma}}^{k+1}- e_{\bs{\sigma}}^{k}, \bs{\tau}) - \frac12 b(\bs{\tau}, e_{\bs{u}}^{k+1} -e_{\bs{u}}^{k}) = 0,
}
so we get another set of error equations
\begin{align*}
    &\frac12 a(e_{\bs{\sigma}}^{h,k+1} - e_{\bs{\sigma}}^{h,k}, \bs{\tau}) - \frac12 b(\bs{\tau}, e_{\bs{u}}^{h,k+1} - e_{\bs{u}}^{h,k}) 
    \\
    &\quad = \frac12 a(e_{\bs{\sigma}}^{I,k+1} - e_{\bs{\sigma}}^{I,k}, \bs{\tau}) + \frac12 b(\bs{\tau}, e_{\bs{u}}^{I,k+1}-e_{\bs{u}}^{I,k}),
    \\
    & \frac12 b(e_{\bs{\sigma}}^{h,k}+e_{\bs{\sigma}}^{h,k+1}, \bs{v}) + \frac{1}{\Delta t} c \left(e_{\bs{u}}^{h,k+1} - e_{\bs{u}}^{h,k}, \bs{v} \right)
    \\
    &\quad =c \left(\frac{1}{\Delta t}(\bs{u}^{k+1} - \bs{u}^k) - \frac12(\partial_t \bs{u}^k + \partial_t \bs{u}^{k+1}), \bs{v} \right) - \frac12 b \left(e_{\bs{\sigma}}^{I,k} + e_{\bs{\sigma}}^{I,k+1}, \bs{v} \right)
    \\
    &\qquad - \frac12 \left(d(\bs{u}_{h}^{k+1},\bs{v}) - d(\bs{u}^{k+1}, \bs{v}) + d(\bs{u}_{h}^{k}, \bs{v}) - d(\bs{u}^k, \bs{v}) \right) .
\end{align*}
Again by \eqref{eq:b-cancellation-eqs}, we get reduced error equations
\begin{align*}
    &\frac12 a(e_{\bs{\sigma}}^{h,k+1} - e_{\bs{\sigma}}^{h,k}, \bs{\tau}) - \frac12 b(\bs{\tau}, e_{\bs{u}}^{h,k+1} - e_{\bs{u}}^{h,k}) = \frac12 a(e_{\bs{\sigma}}^{I,k+1} - e_{\bs{\sigma}}^{I,k}, \bs{\tau}) ,
    \\
    & \frac12 b(e_{\bs{\sigma}}^{h,k}+e_{\bs{\sigma}}^{h,k+1}, \bs{v}) + \frac{1}{\Delta t} c \left(e_{\bs{u}}^{h,k+1} - e_{\bs{u}}^{h,k}, \bs{v} \right)
    \\
    &\quad =c \left(\frac{1}{\Delta t}(\bs{u}^{k+1} - \bs{u}^k) - \frac12(\partial_t \bs{u}^k + \partial_t \bs{u}^{k+1}), \bs{v} \right) 
    \\
    &\qquad - \frac12 \left(d(\bs{u}_{h}^{k+1},\bs{v}) - d(\bs{u}^{k+1}, \bs{v}) + d(\bs{u}_{h}^{k}, \bs{v}) - d(\bs{u}^k, \bs{v}) \right) .
\end{align*}
By taking $\bs{\tau} = 2(e_{\bs{\sigma}}^{h,k+1} + e_{\bs{\sigma}}^{h,k})$, $\bs{v} = 2(e_{\bs{u}}^{h,k+1} - e_{\bs{u}}^{h,k})$, and adding these two equations,
\begin{align*}
    & a(e_{\bs{\sigma}}^{h,k+1}, e_{\bs{\sigma}}^{h,k+1}) -  a(e_{\bs{\sigma}}^{h,k},e_{\bs{\sigma}}^{h,k}) + \frac{2}{\Delta t} c(e_{\bs{u}}^{k+1} - e_{\bs{u}}^{h,k} ,e_{\bs{u}}^{h,k+1} - e_{\bs{u}}^{h,k})
    \\
    &\quad = a(e_{\bs{\sigma}}^{I,k+1} - e_{\bs{\sigma}}^{I,k}, e_{\bs{\sigma}}^{h,k+1} + e_{\bs{\sigma}}^{h,k}) 
    \\
    &\qquad + 2c\left( \frac{1}{\Delta t} (\bs{u}^{k+1} - \bs{u}^k) -\frac12 (\partial_t \bs{u}^k + \partial_t \bs{u}^{k+1} ), e_{\bs{u}}^{h,k+1} - e_{\bs{u}}^{h,k} \right)
    \\
    &\qquad - (d(\bs{u}_{h}^{k+1}, e_{\bs{u}}^{h,k+1} - e_{\bs{u}}^{h,k}) - d(\bs{u}^{k+1}, e_{\bs{u}}^{h,k+1} - e_{\bs{u}}^{h,k}) )
    \\
    &\qquad - ( d(\bs{u}_{h}^{k}, e_{\bs{u}}^{h,k+1} - e_{\bs{u}}^{h,k}) -d(\bs{u}^k, e_{\bs{u}}^{h,k+1} - e_{\bs{u}}^{h,k}))
    \\
    &\quad =:J_{1}^{k} + J_{2}^{k} + J_{3}^{k} + J_{4}^{k} .
\end{align*}
Taking the summation of the above equation over $k$, we can get 
\mltln{
    \label{eq:esigma-intm-estm1}
    a(e_{\bs{\sigma}}^{h,k}, e_{\bs{\sigma}}^{h,k}) + \frac 2{\Delta t} \sum_{m=0}^{k-1} c(e_{\bs{u}}^{m+1} - e_{\bs{u}}^{h,m} ,e_{\bs{u}}^{h,m+1} - e_{\bs{u}}^{h,m})
    \\
    = a(e_{\bs{\sigma}}^{h,0}, e_{\bs{\sigma}}^{h,0}) + \sum_{m=0}^{k-1} \LRp{ J_1^m + J_2^m + J_3^m + J_4^m} .
}
By an argument similar to \eqref{eq:I5-estm}, we estimate $J_1^m$ with Young's inequality by
\algn{
    \notag
    |J_{1}^{m}| &\le  \|e_{\bs{\sigma}}^{I,m+1} - e_{\bs{\sigma}}^{I,m}\|_{L^2(\Omega)} \|e_{\bs{\sigma}}^{h,m+1} + e_{\bs{\sigma}}^{h,m}\|_{L^2(\Omega)},
    \\
    \label{eq:J1-estm}
    &\le C\Delta t h^r \| \partial_t \bs{\sigma} \|_{L^{\infty}(t_m,t_{m+1}; H^r(\Omega))}  \|e_{\bs{\sigma}}^{h,m+1} + e_{\bs{\sigma}}^{h,m}\|_{L^2(\Omega)}
    \\
    \notag
    &\le C\Delta t h^{2r} \| \partial_t \bs{\sigma} \|_{L^{\infty}(t_m,t_{m+1}; H^r(\Omega))}^2 + \frac{\Delta t}{4} \LRp{a(e_{\bs{\sigma}}^{h,m+1},e_{\bs{\sigma}}^{h,m+1}) + a(e_{\bs{\sigma}}^{h,m},e_{\bs{\sigma}}^{h,m}) } .
}
For $J_2^m$,
\algns{
    |J_{2}^{m}| &\le 2\left\|\frac{1}{\Delta t} (\bs{u}^{m+1} - \bs{u}^m) - \frac12(\partial_t \bs{u}^m + \partial_t \bs{u}^{m+1}) \right\|_{L^2(\Omega)} \|e_{\bs{u}}^{h,m+1} - e_{\bs{u}}^{h,m}\|_{L^2(\Omega)},
    \\
    & \le 2{\Delta t} \left\|\frac{1}{\Delta t}(\bs{u}^{m+1} - \bs{u}^m) - \frac12(\partial_t \bs{u}^m + \partial_t \bs{u}^{m+1}) \right\|_{L^2(\Omega)}^{2} 
    \\
    &\quad + \frac{1}{2\Delta t} \|e_{\bs{u}}^{h,m+1} - e_{\bs{u}}^{h,m}\|_{L^2(\Omega)}^{2}
    \\
    & \le 2 (\Delta t)^{-1} \left\|(\bs{u}^{m+1} - \bs{u}^m) - \frac{\Delta t}2(\partial_t \bs{u}^m + \partial_t \bs{u}^{m+1}) \right\|_{L^2(\Omega)}^{2} 
    \\
    &\quad + \frac{1}{2\Delta t} \|e_{\bs{u}}^{h,m+1} - e_{\bs{u}}^{h,m}\|_{L^2(\Omega)}^{2}
    \\
    & \le C (\Delta t)^5 \left\|\partial_t^3 \bs{u} \right\|_{L^{\infty}(t_m,t_{m+1};L^2(\Omega))}^{2} + \frac{1}{2\Delta t} \|e_{\bs{u}}^{h,m+1} - e_{\bs{u}}^{h,m}\|_{L^2(\Omega)}^{2} .
}
By \eqref{eq:lipschitz-continuity}, the Cauchy--Schwarz inequality, Young's inequality, and \eqref{eq:u-approx}, 
\algns{
    |J_{3}^{m}| &\le  L\|\bs{u}_{h}^{m+1}-\bs{u}^{m+1}\|_{L^2(\Omega)} \|e_{\bs{u}}^{h,m+1} - e_{\bs{u}}^{h,m}\|_{L^2(\Omega)}
    \\
    &\le  L\| e_{\bs{u}}^{h,m+1} - e_{\bs{u}}^{I,m+1} \|_{L^2(\Omega)} \|e_{\bs{u}}^{h,m+1} - e_{\bs{u}}^{h,m}\|_{L^2(\Omega)}
    \\
    & \le {\Delta t} L^2 \LRp{ \|e_{\bs{u}}^{h,m+1} \|_{L^2(\Omega)}^2+ \|e_{\bs{u}}^{I,m+1} \|_{L^2(\Omega)}^{2}} + \frac{1}{2\Delta t}\|e_{\bs{u}}^{h,m+1} - e_{\bs{u}}^{h,m}\|_{L^2(\Omega)}^{2}
    \\
    & \le {\Delta t} L^2 \|e_{\bs{u}}^{h,m+1} \|_{L^2(\Omega)}^2 + \frac{1}{2\Delta t}\|e_{\bs{u}}^{h,m+1} - e_{\bs{u}}^{h,m}\|_{L^2(\Omega)}^{2} 
    \\
    &\quad + C\Delta t h^{2r} \| {\bs{u}} \|_{L^\infty(t_m,t_{m+1};H^r(\Omega))}^2 .
}
A completely same argument gives
\begin{align*}
    |J_{4}^{m}| &\le {\Delta t} L^2 \|e_{\bs{u}}^{h,m} \|_{L^2(\Omega)}^2 + \frac{1}{2\Delta t}\|e_{\bs{u}}^{h,m+1} - e_{\bs{u}}^{h,m}\|_{L^2(\Omega)}^{2} 
    \\
    &\quad + C\Delta t h^{2r} \| {\bs{u}} \|_{L^\infty(t_m,t_{m+1};H^r(\Omega))}^2 .
\end{align*}
By combining these estimates of $J_{1}^{k},J_{2}^{k},J_{3}^{k},J_{4}^{k}$, we have
%
%
\algn{
    \notag 
    &a(e_{\bs{\sigma}}^{h,k}, e_{\bs{\sigma}}^{h,k}) + \frac 1{4\Delta t} \sum_{m=0}^{k-1} \| e_{\bs{u}}^{m+1} - e_{\bs{u}}^{h,m} \|_{L^2(\Omega)}^2 
    \\
    \notag
    &\le C\Delta t h^{2r} \sum_{m=0}^{k-1} \| \partial_t \bs{\sigma} \|_{L^{\infty}(t_m,t_{m+1}; H^r(\Omega))}^2 
    \\
    \notag 
    &\quad+ \sum_{m=0}^{k-1} \frac{\Delta t}{4} \sum_{m=0}^{k-1} 
     \LRp{a(e_{\bs{\sigma}}^{h,m+1},e_{\bs{\sigma}}^{h,m+1}) + a(e_{\bs{\sigma}}^{h,m},e_{\bs{\sigma}}^{h,m}) } 
    \\
    \label{eq:esigma-intm-estm}    
    &\quad + C (\Delta t)^5 \sum_{m=0}^{k-1} \left\|\partial_t^3 \bs{u} \right\|_{L^{\infty}(t_m,t_{m+1};L^2(\Omega))}^{2} 
    \\
    \notag
    &\quad + {\Delta t} L^2 \sum_{m=0}^{k-1} \LRp{ \|e_{\bs{u}}^{h,m+1} \|_{L^2(\Omega)}^2 + \|e_{\bs{u}}^{h,m} \|_{L^2(\Omega)}^2 } 
    \\
    \notag 
    &\quad + C\Delta t h^{2r} \sum_{m=0}^{k-1} \| {\bs{u}} \|_{L^\infty(t_m,t_{m+1};H^r(\Omega))}^2 .
}
%
%
%
The sum of \eqref{eq:eu-intm-estm} and \eqref{eq:esigma-intm-estm} gives 
\algns{
    \notag
    &\|e_{\bs{u}}^{h,k}\|_{L^{2}(\Omega)}^{2} + \frac{\Delta t}{4} \sum_{m=0}^{k-1} a (e_{\bs{\sigma}}^{h,m} + e_{\bs{\sigma}}^{h,m+1}, e_{\bs{\sigma}}^{h,m} + e_{\bs{\sigma}}^{h,m+1})
    \\
    &+ a(e_{\bs{\sigma}}^{h,k}, e_{\bs{\sigma}}^{h,k}) + \frac 1{2\Delta t} \sum_{m=0}^{k-1} \| e_{\bs{u}}^{m+1} - e_{\bs{u}}^{h,m} \|_{L^2(\Omega)}^2
    \\
    &\quad \le \|e_{\bs{u}}^{h,0}\|_{L^2(\Omega)}^{2} + \Delta t \LRp{5L^2 + \frac 12 } \sum_{m=0}^{k-1} \LRp{ \|e_{\bs{u}}^{h,m} \|_{L^2(\Omega)}^2 + \|e_{\bs{u}}^{h,m+1} \|_{L^2(\Omega)}^2 } 
    \\
    \notag
    &\qquad + C \Delta t \sum_{m=0}^{k-1} \LRp{ h^{2r} \|\bs{\sigma} , \bs{u}\|_{L^{\infty}(t_m,t_{m+1}; H^r(\Omega))}^2  +  (\Delta t)^4 \|\partial_{t}^3 \bs{u}\|_{L^{\infty}(t_m, t_{m+1}; L^2(\Omega))}^2 } 
    \\
    \notag
    &\qquad + C\Delta t h^{2r} \sum_{m=0}^{k-1} \sum\| \partial_t \bs{\sigma} \|_{L^{\infty}(t_m,t_{m+1}; H^r(\Omega))}^2 
    \\
    \notag 
    &\qquad + \frac{\Delta t}{4} \sum_{m=0}^{k-1} \LRp{a(e_{\bs{\sigma}}^{h,m+1},e_{\bs{\sigma}}^{h,m+1}) + a(e_{\bs{\sigma}}^{h,m},e_{\bs{\sigma}}^{h,m}) } 
    \\
    \notag 
    &\qquad + Ck\Delta t h^{2r} \sum_{m=0}^{k-1}\sum \| {\bs{u}} \|_{L^\infty(t_m,t_{m+1};H^r(\Omega))}^2 .
}
We remark that $\Delta t \sum_{m=0}^{k-1} \| g \|_{L^{\infty}(t_m, t_{m+1}; \mathcal{X})} \le k \Delta t \| g \|_{L^{\infty}(0,t_k; \mathcal{X})}$ for a variable $g = \bs{u}, \bs{\sigma}$ and a norm $\mathcal{X}$, and $k \Delta t = T$ at the final time step $k = M$. Thus, this $\Delta t$ in  $\Delta t \sum_{m=0}^{k-1} \| g \|_{L^{\infty}(t_m, t_{m+1}; \mathcal{X})}$ does not give an additional order of convergence. Finally, the conclusion follows if we apply the discrete Gr\"onwall inequality in Theorem~\ref{thm:gronwall-ineq} to the above inequality. 
%
%
\end{proof}

\bigskip

\section{Numerical experiments}
\label{sec:numerical-experiment}
In this section we present numerical experiment results to illustrate that our theoretical error estimates are valid. All numerical experiments are carried out with FEniCS 2019.1.0 (see \cite{fenics-book}). 

For numerical experiments we set $\Omega = [0,1] \times [0,1]$, $\Gamma = \{1/2\} \times [0,1]$, $\Omega_= = [0,1/2] \times [0,1]$, $\Omega_+ = [1/2,1] \times [0,1]$. We use structured meshes such that $\Omega$ is divided by $M \times M$ subsquares and each subsquare is divided into two triangles. In numerical experiments for convergence rates of errors, we compute errors for $M=4,8,16,32,64$. We remark that this $M$ is not necessarily same as the $M$ for time step sizes in Section~\ref{sec:error-analysis}. In the presentation below, we use $h$ for $1/M$.

{\small
\begin{table}[b]
		\begin{tabular}{lc|cc|cc|cc|cc} \hline
			\multirow{2}{*}{} & \multirow{2}{*}{$h_{\max}$} & \multicolumn{2}{c|}{$  \|u_1 - u_{1,h} \|_{L^2(\Omega)} $} & \multicolumn{2}{c|}{$ \|u_2 - u_{2,h} \|_{L^2(\Omega)} $} & \multicolumn{2}{c|}{$ \|\sigma_1 - \sigma_{1,h} \|_{L^2(\Omega)} $} & \multicolumn{2}{c}{$ \|\sigma_2 - \sigma_{2,h} \|_{L^2(\Omega)} $}
			\\
			 & & error & rate & error & rate & error & rate & error & rate \\ \hline \hline 

			\multirow{5}{*}{} & $1/4$  &  8.0723e-02  &  --  &  5.9019e-02  &  --  &  2.1987e-01  &  --  &   4.4314e-01 \\ 
			 & $1/8$  &  4.0090e-02  &  1.01  & 2.8935e-02  &  1.03  & 1.1241e-01  &  0.97  &   2.3087e-01  &  0.94\\ 
			 & $1/16$  &  2.0008e-02  &  1.00  &  1.4365e-02  &  1.01  &  5.6572e-02  &  0.99  &  1.1677e-01  &  0.98\\ 
			 & $1/32$  &  9.9991e-03  &  1.00  &  7.1683e-03  &  1.00  &  2.8336e-02   &  1.00  &  5.8561e-02  &  1.00\\ 
			 & $1/64$  &  4.9989e-03  &  1.00  &  3.5823e-03  &  1.00  & 1.4175e-02  & 1.00  &  2.9303e-02  &  1.00\\ 
\hline 
		\end{tabular} 
	\caption{{\small Convergence results with $\Delta t = h$, the Crank--Nicolson method, and $(RT_0, DG_0)$. } }
    \label{table:RT0}
\end{table}
}
%


{\small
 \begin{table}[b]
 	\begin{center}
 		\begin{tabular}{lc|cc|cc|cc|cc} \hline
 			\multirow{2}{*}{} & \multirow{2}{*}{$h_{\max}$} & \multicolumn{2}{c|}{$  \|u_1 - u_{1,h} \|_{L^2(\Omega)} $} & \multicolumn{2}{c|}{$ \|u_2 - u_{2,h} \|_{L^2(\Omega)} $} & \multicolumn{2}{c|}{$ \|\sigma_1 - \sigma_{1,h} \|_{L^2(\Omega)} $} & \multicolumn{2}{c}{$ \|\sigma_2 - \sigma_{2,h} \|_{L^2(\Omega)} $}
 			\\
 			 & & error & rate & error & rate & error & rate & error & rate\\ \hline \hline 

 			\multirow{5}{*}{} & $1/4$ &  5.2567e-03  &  --  &  1.1226e-02  &  --  &  2.4258e-02 &  --  &  6.9012e-02 &  --   \\
 			 & $1/8$  &  1.3281e-03  &  1.98  &  2.8502e-03  &  1.98  & 6.1853e-03  &  1.97 &  1.7601e-02  &  1.97\\ 
 			 & $1/16$  & 3.3292e-04  &  2.00  &  7.1518e-04  &  1.99  &  1.5604e-03  &  1.99  &  4.4421e-03  &  1.99\\ 
 			 & $1/32$  &  8.3284e-05  &  2.00  &  1.7896e-04  &  2.00  &  3.9199e-04  & 1.99  &  1.1164e-03  &  1.99\\ 
 			 & $1/64$  &  2.0824e-05  &  2.00  &  4.4749e-05  &  2.00 & 9.8255e-05  & 2.00  &  2.7989e-045  &  2.00\\ 
 			 \hline
             \hline
 		\end{tabular}
 	\end{center}
 	\caption{{\small Convergence results with $\Delta t = h$, the Crank--Nicolson method, and $(RT_1, DG_1)$.} } 
    \label{table:RT1}
 \end{table}
}

\begin{figure}[h!] 
\centering
\includegraphics[scale=.42]{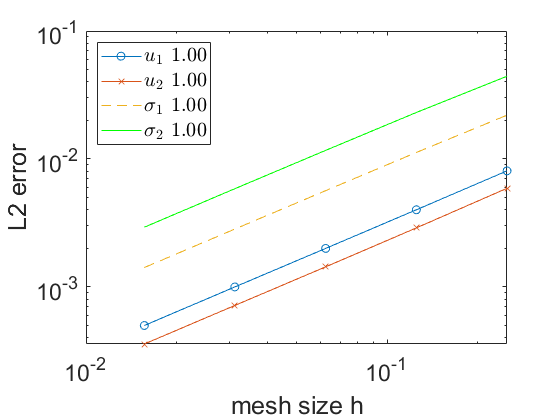} 
\includegraphics[scale=.42]{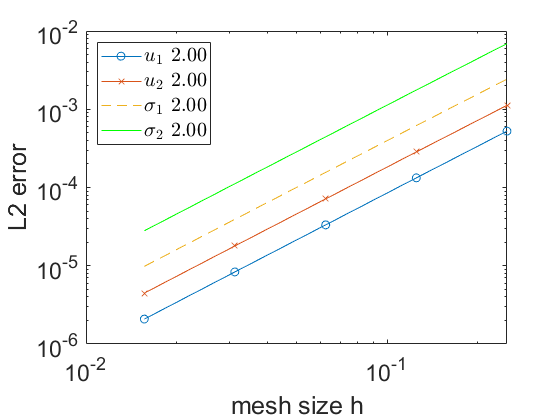} 
\caption{Graphs for asymptotic convergence rates of errors for Table~\ref{table:RT0} and Table~\ref{table:RT1}}
\end{figure}

In our experiments, we used the lowest and the second lowest Raviart--Thomas elements, denoted by $RT_0$ and $RT_1$ for $V_h$. The finite element spaces with piecewise constant and discontinuous piecewise linear polynomials are denoted by $DG_0$ and $DG_1$, and these spaces are used for $V_h$. The stable mixed finite element pairs are $(RT_0, DG_0)$ and $(RT_1, DG_1)$.

In our error analysis, the expected convergence rates of all errors are the first and second orders, respectively. We impose Dirichlet boundary conditions on the top and bottom boundary components of $\Omega$ for $i=1,2$, and impose Neumann boundary conditions on the left and right boundary components of $\Omega$ for $i=1,2$. 

For manufactured solutions we define 
\begin{align*}
    u_i = 
    \begin{cases}
    u_{i,+}, \quad \text{in } \Omega_+,\\
    u_{i,-}, \quad \text{in } \Omega_-,
    \end{cases}
\end{align*}
for $i=1,2$ with appropriate functions $u_{i,\pm}$ which will be given below. First, let 
\algns{
    \phi(x,t) = 1 + (\cos{t})\left(x - \frac12\right)^2,
}
and define
\begin{align*}
    \tilde{u}_{1,-} (x,y) &= \sin{\frac{\pi x}{3}} + \left(x-\frac 12 \right)^2 y (1-y) ,
    \\
    \tilde{u}_{1,+} (x,y) &= \sin{\frac{\pi x}{3}} + 1 + \left(x-\frac 12 \right)^2 \sin (\pi y) ,
    \\
    \tilde{u}_{2,-} (x,y) &= \cos{\frac{\pi x}{3}} + 2 \left(x-\frac 12 \right)^2 y (1-y), 
    \\
    \tilde{u}_{2,+} (x,y) &= \cos \frac{\pi x}{3} - 1 + 2 \left(x-\frac 12 \right)^2 \sin(\pi y). 
\end{align*}
Then, $u_{i,\pm}$, $i=1,2$ are defined by 
\begin{align*}
    {u}_{i,-} = \phi(x,t) u_{i,-}, \quad {u}_{i,+} = \phi(x,t) u_{i,+}.
\end{align*}
For nonlinearities we take $f_1(u_1, u_2) = u_1^2 u_2^3$ and $f_2(u_1, u_2) = u_1^3 u_2^3$. 
Then, $\sigma_{i,\pm}$, $f_{\i,\pm}$, $i=1,2$ are also defined by 
\begin{align*}
    {\sigma}_{i,\pm} &= - \nabla{u}_{i,\pm}, 
    \\
    f_{1,\pm} &= \div {\sigma}_{1,\pm} + u_{1,\pm}^2 u_{2,\pm}^3,
    \\
    f_{2,\pm} &= \div {\sigma}_{2,\pm} + u_{1,\pm}^3 u_{2,\pm}^3.
\end{align*}
%

We remark that these nonlinearities are not Lipschitz continuous with uniform Lipschitz constants in general. However, if $u_1$ and $u_2$ are functions in $L^{\infty}(0,T;L^{\infty}(\Omega))$, then the Lipschitz continuity assumption \eqref{eq:lipschitz-continuity} is satisfied for $0\le t \le T$. Since we use manufactured solutions which are in $L^{\infty}(0,T;L^{\infty}(\Omega))$ in our numerical experiments, our theoretical error estimates are still valid in our numerical experiments.

In Table~\ref{table:RT0} and Table~\ref{table:RT1} we present convergence of errors for $\Delta t = h$ and for $(RT_0, DG_0)$, $(RT_1, DG_1)$ pairs. The results show that optimal convergence rates, which we expected in theoretical analysis, are obtained in all cases.

\section{Conclusion}
\label{sec:conclusion}
In this paper we develop mixed finite element methods for nonlinear reaction-diffusion equations with Robin-type interface conditions on membrane structures in the domain. We proved well-posedness of fully discrete scheme with the Crank--Nicolson method and the a priori error estimates of solutions with a sufficiently small time-step size assumption. In some numerical results, we observed that the errors of solutions converge as expected by our theoretical analysis. In our future research, we will study positivity-preserving numerical methods for the problems.  

\section*{Statements and Declarations}

\noindent {\bf Funding} Jeonghun J. Lee gratefully acknowledge support from the
National Science Foundation (DMS-2110781). 

\bibliographystyle{unsrt}


\end{document}